\theoremstyle{plain}
\newtheorem{theorem}{Theorem}[section]
\newtheorem{corollary}[theorem]{Corollary}
\newtheorem{lemma}{Lemma}[section]
\newtheorem{proposition}{Proposition}[section]
\theoremstyle{definition}
\newtheorem{definition}{Definition}[section]
\newtheorem{example}{Example}[section]
\numberwithin{equation}{section}
\newcommand{\bfa}{\mathbf{a}}
\newcommand{\bfq}{\mathbf{q}}
\newcommand{\D}{\mathcal{D}}
\newcommand{\Sn}{\mathcal{S}^n}
\newcommand{\cH}{\mathcal{H}}
\newcommand{\dd}{\mathrm{\,d}}
\newcommand{\mR}{\mathbb{R}}
\newcommand{\oO}{\overline{\Omega}}
\newcommand{\oT}{\overline{\Theta}}
\newcommand{\oB}{\overline{B}}
\newcommand{\oR}{\overline{\mR}}
\newcommand{\oF}{\overline{F}}
\newcommand{\tF}{\widetilde{F}}
\DeclareMathOperator{\tr}{tr}
\DeclareMathOperator{\di}{div}
\DeclareMathOperator{\sgn}{sgn}
\DeclareMathOperator{\dist}{dist}
\DeclareMathOperator{\ex}{ex}
\begin{document}

\author[K. K. Brustad]{Karl K. Brustad}
\title[Comparison principle for second order elliptic equations]{On the comparison principle for\\ second order elliptic equations without\\ first and zeroth order terms}
\address{Frostavegen 1691\\ 7633 Frosta\\ Norway}
\email{brustadkarl@gmail.com}
\subjclass{35A02, 35B51, 35D40, 35J15, 35J25, 35J60, 35J70.}
\keywords{Uniqueness of viscosity solutions, Perturbed level set, Canonical operator.}

\begin{abstract}
We consider the comparison principle for semicontinuous viscosity sub- and supersolutions of second order elliptic equations on the form $F(\cH w,x) = 0$. A structural condition on the operator is presented that seems to unify the different existing theories. A new result is obtained and the proofs of the classical results are simplified.
\end{abstract}

\maketitle

\section{Introduction}

When the concept of viscosity solutions was developed in the 1980s, it was initially not known whether a solution of the Dirichlet problem for a second order equation $F(\cH w,\nabla w,w,x) = 0$ would be unique, except for in a few particular cases. The situation was resolved by a regularization procedure introduced by Jensen, which eventually led to the very efficient, but somewhat mysterious-looking \emph{structure condition} (3.14) in \cite{MR1118699} on the operator $F$. In our setting, with equations
\begin{equation}\label{eq:equation}
F(\cH w,x) = 0
\end{equation}
independent of $\nabla w$ and $w$, the condition reads as follows.
\begin{equation}\label{eq:classical_structure_cond}
\begin{aligned}
&\text{There is a modulus of continuity $\omega$ such that}\\
&F(X,x) - F(Y,y) \leq \omega\Big(\alpha|x-y|^2 + |x-y| \Big)\\
&\text{whenever $x,y\in\Omega$, $\alpha>0$, and $X,Y\in\Sn$ satisfying}\\
&-\alpha\begin{bmatrix}
I & 0\\ 0 & I
\end{bmatrix} \leq \begin{bmatrix}
X & 0\\ 0 & -Y
\end{bmatrix} \leq \alpha  \begin{bmatrix}
I & -I\\ -I & I
\end{bmatrix}.
\end{aligned}
\end{equation}

The \emph{comparison principle}, by which the uniqueness of solutions is immediate, could then be proved for a large class of equations.

\begin{definition}[Comparison principle]
If $v\in USC(\oO)$ is a viscosity subsolution and $u\in LSC(\oO)$ is a viscosity supersolution of \eqref{eq:equation} in an open and bounded set $\Omega\subseteq\mR^n$ with $v\rvert_{\partial\Omega}\leq u\rvert_{\partial\Omega}$, then $v\leq u$ also in $\Omega$.
\end{definition}


A completely different approach was taken in a program initiated by Krylov in \cite{MR1284912}, further developed by e.g. Harvey, Lawson, Cirant, and Payne.
Krylov came up with a simple, but powerful trick: replace a badly behaving equation $F(\cH w) = 0$ with the nicer equivalent $\oF(\cH w) = 0$ where $\oF$ is the signed distance function from the null-level set of $F$ in $\Sn $. In this way he proved the comparison principle for convex elliptic equations by reducing them to Bellman equations, which was exactly the kind of equations manageable by the viscosity theory \emph{before} Jensen's regularization technique. The convexity of $F$ is in fact unessential (\cite{MR2487853}), and in \cite{MR3677871} the method is extended to also include the non-autonomous case $F(\cH w,x) = 0$. Now, the \emph{sub- and superlevel sets}
\[\Theta_-(x) := \left\{X\in \Sn \;|\; F(X,x)\leq 0\right\},\quad \Theta_+(x) := \left\{X\in \Sn \;|\; F(X,x)\geq 0\right\},\]
depend on $x$ and the comparison principle was proved under a \emph{uniform Hausdorff continuity} of $\Theta_+(x)$. i.e.,

\begin{equation}\label{eq:Hausdorff_cond}
\lim_{y\to x}\sup_{X\in \Sn }\left|\dist(X,\Theta_+(x)) - \dist(X,\Theta_+(y))\right| = 0\qquad\text{uniformly in $\Omega$.}
\end{equation}
Actually, \cite{MR3677871} study the Dirichlet problem for \emph{differential inclusions} $\cH w\in\partial\Theta(x)$ where $\Theta$ is a given proper and closed positive \emph{elliptic map} (\eqref{eq:elliptic_set_def}). The notion of an operator $F$ is abandoned. However, the suggested weak interpretation is shown to be equivalent to viscosity when $\Theta(x)$, is the superlevel set of a continuous elliptic operator.

Needless to say, the two conditions \eqref{eq:classical_structure_cond} and \eqref{eq:Hausdorff_cond} are very different and indeed, none is weaker than the other.

The biggest problem with \eqref{eq:Hausdorff_cond} is that it does not take full advantage of the \emph{Theorem of Sums} (Lemma \ref{lem:ishii}). Thus, it cannot handle operators with 'diverging null-level sets' in $\Sn$ (as conceptually illustrated in Figure \ref{fig:bad}). In particular, \eqref{eq:Hausdorff_cond} does not hold for linear equations $\tr(A(x)\cH w) = f(x)$ with non-constant coefficients $A\colon\Omega\to\Sn_+$.

On the other hand, \eqref{eq:classical_structure_cond} have problems that are not present in \eqref{eq:Hausdorff_cond}:
Firstly, Example \ref{ex:determinat_eq_not_classical} and Example \ref{ex:canonical_determinat} show that \eqref{eq:classical_structure_cond} is not \emph{invariant under equivalence}. Meaning that, given two equivalent equations $F(\cH w,x)=0$ and $\oF(\cH w,x)=0$ (i.e., equations having the exact same set of sub- and supersolution) then \eqref{eq:classical_structure_cond} may hold for $\oF$ but not for $F$. An immediate conclusion is of course that \eqref{eq:classical_structure_cond} is not necessary. Secondly, \eqref{eq:classical_structure_cond} is insufficient by its own. Some kind of additional \emph{non-degeneracy} in the operator has to be assumed. For this reason, it is not possible to compare \eqref{eq:classical_structure_cond} with other results directly. Equations on the form $F(\cH w) = f(x)$ are especially troublesome since \eqref{eq:classical_structure_cond} then automatically holds for every (uniformly) continuous $f$, thereby giving no clue on what the sufficient non-degeneracy of $F$ must be. In our view, this is the major drawback of the classical structure condition. A non-degeneracy like \emph{strict ellipticity} or the weaker $F(X+tI,x)-F(X,x) \geq \ell(t)>0$, $t>0$, would do, but even the latter is not necessary as illustrated by Proposition \ref{prop:semiautonomous} and Proposition \ref{prop:alog_equation}.
Finally,
it should be mentioned that continuity of $F$ is not something that has to be assumed. Also, in \cite{MR2487853} and \cite{MR3677871} a more natural notion of ellipticity \eqref{eq:elliptic_set_def} is used, which is weaker than the classical (degenerate) ellipticity \eqref{eq:elliptic}.



The purpose of this paper should then be clear: To present a condition on $F$ that incorporates both \eqref{eq:classical_structure_cond} and \eqref{eq:Hausdorff_cond}, and that is without the problems listed above.
Before we give the precise statement of the comparison result, we shall try to describe what our condition \eqref{eq:cond} in Theorem \ref{thm} below means. 
For a parameter $\delta\geq 0$ we introduce the \emph{perturbed sub- and superlevel sets}
\begin{equation}\label{eq:Tddef}
\begin{aligned}
\Theta^\delta_-(x) &:= \Big\{X(I+\delta X)^{-1}\;\;\big|\;\; F(X,x)\leq 0, \; \delta|X| < 1\Big\},\\
\Theta^\delta_+(x) &:= \Big\{X(I-\delta X)^{-1}\;\;\big|\;\; F(X,x)\geq 0, \; \delta|X| < 1\Big\}.
\end{aligned}
\end{equation}
Note that $\Theta_\pm^0(x) = \Theta_\pm(x)$. The expression $X(I-\delta X)^{-1}$ is familiar to those who are acquainted with the classical structure condition. It is related to the matrix inequality in \eqref{eq:classical_structure_cond}. In fact, as mentioned in e.g. the remarks following Theorem IV.1 in \cite{MR1031377}, it is enough to consider $Y=X(I-\frac{1}{\alpha}X)^{-1}$ in order to verify \eqref{eq:classical_structure_cond}. See Proposition \ref{prop:matrixineq}, and in general Section \ref{sec:Tdelta} for an account on the mapping
\[T_\delta(X) := X(I-\delta X)^{-1}.\] Most importantly, $T_\delta(X)\geq X$, which by ellipticity implies $\Theta_+^\delta(x) \subseteq \Theta_+(x)$. However, at two different points $x$ and $y$, it is possible that $\Theta_+(y)$ no longer contains $\Theta_+^\delta(x)$. The 'lack of containment' can be measured by the \emph{excess} of $\Theta_+^\delta(x)$ over $\Theta_+(y)$. It is defined as the smallest number $\tau\geq 0$ so that $\Theta_+^\delta(x)$ is a subset of the $\tau$-fattening of $\Theta_+(y)$. We shall require that \emph{The excess vanish as $\delta\to 0$ whenever $x,y\to x_0$ such that $|x-y|^2/\delta \to 0$.}
Our condition is therefore a certain continuity of the set-valued function $(\delta,x)\mapsto\Theta_+^\delta(x)$ at $\delta = 0$ and $x=x_0$.

\begin{theorem}\label{thm}
Let $F\colon \Sn \times\Omega \to \oR$ be elliptic at 0 (Def. \ref{def:0}).
If, for all $x_0\in\Omega$, $\partial\Theta_+(x_0)= \partial\Theta_-(x_0)\neq \emptyset$ and
\begin{equation}
\lim_{t\to 0^+}\sup_{x,y\in B_{t}(x_0)} \ex\left( \Theta^{\frac{|x-y|^2}{t}}_{+}(x), \Theta_{+}(y) \right) = 0,
\label{eq:cond}
\end{equation}
then
the comparison principle holds for the equation $F(\cH w,x) = 0$ in the open and bounded $\Omega\subseteq\mR^n$.
\end{theorem}

Since the excess is
$\ex(\Theta_1,\Theta_2) = \sup_{Z\in\Theta_1}\dist(Z,\Theta_2)$,
where, as always
\begin{equation}
\dist(Z,\Theta) := \inf_{Y\in\Theta}|Z-Y|,
\label{eq:distdef}
\end{equation}
the left-hand side of \eqref{eq:cond} is equal to
\begin{align*}
&\lim_{t\to 0^+}\sup_{\substack{x,y\in B_t(x_0)\\ Z\in\Theta^\delta_+(x)}} \dist\left( Z, \Theta_+(y) \right),\\
&\lim_{t\to 0^+}\sup_{\substack{x,y\in B_t(x_0)\\ X\in\Theta_+(x)\cap B_{1/\delta}}} \dist\left( T_\delta(X), \Theta_+(y) \right),\qquad\text{and}\\
&\lim_{t\to 0^+}\sup_{\substack{x,y\in B_t(x_0)\\ X\in\Theta_+(x)\cap B_{1/\delta}}}\inf_{Y\in\Theta_+(y)} \left| T_\delta(X) - Y\right|,
\end{align*}
where $\delta:=|x-y|^2/t$. We shall be working mostly with the first and second formulation.

Our non-degeneracy condition $\partial\Theta_+(x_0)= \partial\Theta_-(x_0)\neq \emptyset$ is the same as the one in \cite{MR3677871}, albeit in a different form. The non-emptiness is not necessary but it simplifies the exposition and it rules out some pathological cases (Consider for example the equation $F(\cH w,x) := 1 = 0$ where the comparison principle vacuously holds).
The requirement $\partial\Theta_+(x_0)= \partial\Theta_-(x_0)$ corresponds to $F(X_0 -tI,x_0)<0<F(X_0+tI,x_0)$ whenever $F(X_0,x_0)=0$ and $t>0$ (Proposition \ref{thm:2}, (iv) $\iff$ (vi)). We do not know if it is sharp,\footnote{unless $x\mapsto\dist(X_0,\Theta_{+}(x))$ is continuous at $x_0$. Then a counterexample can be produced in the same way as in the proof (i) $\Rightarrow$ (ii) of Proposition \ref{thm:2}.} but it is weaker than any other non-degeneracy condition we have been able to find in the literature.

\begin{figure}[h]
	\centering
	\begin{subfigure}[b]{0.45\textwidth}
		\centering
		\includegraphics{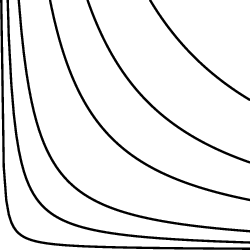}
		\caption{Good}
		\label{fig:good}
	\end{subfigure}
	\begin{subfigure}[b]{0.45\textwidth}
		\centering
		\includegraphics{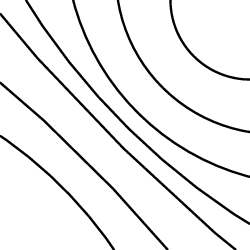}
		\caption{Bad}
		\label{fig:bad}
	\end{subfigure}
	\caption{Null-level sets of $F$ in $\Sn $ for different $x\in \Omega$.}\label{fig:goodbad}
\end{figure}

The proof of Theorem \ref{thm} is concluded in Section \ref{sec:proof} by a rather standard application of the Theorem of Sums. In that sense, \eqref{eq:cond} is not based on any 'new' information as compared to \eqref{eq:classical_structure_cond}. It is just a more efficient interpretation of what is 'already there'. A crucial preliminary step in the proof is to show continuity of the functions $x\mapsto\dist(X,\Theta_\pm(x))$. It turns out that this depends on that the missing symmetry of \eqref{eq:cond} in $\Theta_+$ and $\Theta_-$ is only apparent. Some effort is needed in order to show that \eqref{eq:cond} is equivalent to the analogous statement for $\Theta_-^\delta$ and a large portion of Section \ref{sec:oF} and \ref{sec:Tdelta} is thus devoted to this task. But once established, and since the distance is naturally 1-Lipschitz in $X$, we have enough to make use of the important second order semi jet closures of the sub- and supersolutions.

The conditions in Theorem \ref{thm} may seem a bit technical but they are not harder to verify than e.g. \eqref{eq:classical_structure_cond} or \eqref{eq:Hausdorff_cond}. It is usually very easy to check whether a given equation $F(\cH w,x)=0$ is elliptic at 0, and whether $\partial\Theta_+= \partial\Theta_-\neq \emptyset$. As for the continuity of $\Theta_+^\delta$,
the \emph{standard approach} below seem to work well in many situations.
\begin{enumerate}[1)]
	\item Fix $x_0\in\Omega$ and $t_0>0$ so that $\oB_{t_0}(x_0)\subseteq\Omega$. For $0<t\leq t_0$, let $x,y\in B_t(x_0)$ and define $\delta = \delta(x,y,t) := \frac{|x-y|^2}{t}$. Consider $X\in\Theta_+(x)\cap B_{1/\delta}$ (i.e., $F(X,x)\geq 0$ and $\delta|X|<1$) and write $Z := X(I-\delta X)^{-1}$.
	\item Solve the equation
	\[F(Z + \tau I,y) = 0\]
	for $\tau>0$. Or, if that is not not possible, find an upper bound for a solution of $F(Z + \tau I,y) \geq 0$.
	\item Show that $\tau\to 0$ as $t\to 0$.
\end{enumerate}
This will prove \eqref{eq:cond} since
\[\dist(Z,\Theta_+(y)) = \inf_{Y\in\Theta_+(y)}|Y-Z| \leq |Z+\tau I-Z| = \tau\]
and thus,
\[\lim_{t\to 0^+}\sup_{\substack{x,y\in B_t(x_0)\\ Z\in\Theta^\delta_+(x)}} \dist\left( Z, \Theta_+(y) \right) = 0.\]
In 1) we may assume $\delta>0$ and $F(Z,y) < 0$ since $\dist(Z,\Theta_+(y))$ is zero otherwise. 2): If $F$ is continuous, in addition to elliptic at 0 with $\partial\Theta_+= \partial\Theta_-\neq \emptyset$, then the solution of $F(Z+\tau I,y) = 0$ is exactly $\tau = \dist(Z,\Theta_{+}(y))$. Without continuity we still have $Z + \dist(Z,\Theta_{+}(y))I \in\partial\Theta_\pm(y)$. See Section \ref{sec:oF}. In order to obtain 3), we use the information provided in 1) and the properties of $T_\delta(X)$. Lemma \ref{lem:sm} turns out to be a particularly useful result.

In Section \ref{sec:examples} we give examples on how Theorem \ref{thm} can be applied to prove comparison results in various special cases.
We confirm that \eqref{eq:cond} really is weaker than \eqref{eq:classical_structure_cond} and \eqref{eq:Hausdorff_cond}, and show how the proofs of the classical results are simplified by our approach.
In the classical viscosity theory, the additional non-degeneracy together with \eqref{eq:classical_structure_cond} would typically be a strict monotonicity in the zeroth order term or a strict ellipticity \eqref{eq:strict_elliptic}. Another standard technique is to obtain the comparison principle from \eqref{eq:classical_structure_cond} by perturbing the, say, subsolution $v$ into a \emph{strict} subsolution. In equations without gradient-dependence, an obvious candidate is $v(x)+\frac{\tau}{2}|x|^2$. See Section V in \cite{MR1031377} and Section 5.C in \cite{MR1118699}. For this to be possible, a condition like \eqref{eq:ntg}, i.e. $F(X+\tau I,x)-F(X,x)\geq \ell(\tau)>0$, $\tau>0$, seems necessary. 
We believe that Propositions \ref{prop:BM}-\ref{prop:hausdorff} show that Theorem \ref{thm} covers all the comparable cases in \cite{MR1031377}, \cite{MR1118699}, \cite{MR2084272}, \cite{MR2142457}, \cite{MR2246004}, and \cite{MR3677871}.

We continue by proving the comparison principle for linear equations in Proposition \ref{prop:lin} and for equations that are linear in the eigenvalues of $\cH w$ in Proposition \ref{prop:lineig}. Technically -- via Lemma \ref{lem:sm} and after some manipulation of the equations -- these results follow from Proposition \ref{prop:BM}, but we feel that they are still worth to include because they illustrate very nicely how efficient the standard approach can be. It is also instructive to see that the conditions required for the coefficients present them selves very naturally in the proofs. Moreover, we have not found comparison results for equations of the type \eqref{eq:eigenval_eq} anywhere else in the literature.

Finally we look at equations like $F(\cH w) = f(x)$.
As mentioned above, there is now not much help in \eqref{eq:classical_structure_cond} if not \eqref{eq:ntg} or \eqref{eq:strict_elliptic} holds, and nor in \eqref{eq:Hausdorff_cond} if the null-level sets of $F-f$ are diverging as $|X|\to \infty$. In contrast to
the classical results in Proposition \ref{prop:BM}, \ref{prop:strictly}, \ref{prop:lin}, and  \ref{thm:2} -- and the results in Proposition \ref{prop:hausdorff}, \ref{prop:lineig}, and \ref{prop:semiautonomous}, which probably can be proved using existing ideas as well -- we construct in Proposition \ref{prop:alog_equation} an equation where none of these conditions are met in order to show that \eqref{eq:cond} is capable to also produce truly new results.

A natural question is whether our approach can be generalized to equations that also depend on $\nabla w$ and $w$. A dependence of $w$ can in some sense be a simplifying aspect, and in \cite{MR4147574} the Hausdorff condition \eqref{eq:Hausdorff_cond} is extended to include equations on the form $F(\cH w,w,x) = 0$. It should therefore be possible to extend Theorem \ref{thm} in the same direction, but it is not yet clear what the optimal analogous statement of \eqref{eq:cond} should be.

Recently, it seems that progress have been made in the \emph{potential theoretic} approach on equations like $F(\cH w,\nabla w,w) = f(x)$ in \cite{Harvey2020}. Equations become much harder to handle when the gradient is involved, and they are notoriously difficult to treat under a single theory. It is an interesting question
whether \eqref{eq:cond} has something going for it, also in this direction.
A touchstone would be the $p$-Laplace equation $\di(|\nabla w|^{p-2}\nabla w) = 0$.


\subsection{Definitions and notation}

A function $F$ on $\Sn \times\Omega$
is said to be \emph{elliptic} when
\begin{equation}
X\leq Y\qquad\text{implies}\qquad F(X,x)\leq F(Y,x)\qquad\text{for all $x\in\Omega$.}
\label{eq:elliptic}
\end{equation}
Ellipticity makes viscosity solutions consistent with smooth solutions because of the geometric properties it inflicts on the level sets of $F$. In that respect, \eqref{eq:elliptic} is unnecessarily restrictive since we are only interested in the region where $F$ changes sign. A more appropriate notion is therefore \emph{ellipticity at 0}.
\begin{definition}\label{def:0}
$F\colon \Sn \times\Omega\to \oR$ is \emph{elliptic at 0} if, for all $x\in\Omega$,
\[X\leq Y\quad\text{and}\quad 0 \leq (\text{resp.}<)\;F(X,x)\qquad\text{implies}\qquad 0 \leq (\text{resp.}<)\; F(Y,x).\]
\end{definition}
This is equivalent to requiring the super- and sublevel sets to be \emph{positive} and \emph{negative} \emph{elliptic maps}, respectively:
\begin{equation}\label{eq:elliptic_set_def}
\Theta_\pm(x) + \Sn_\pm = \Theta_\pm(x)\qquad \forall x\in\Omega.
\end{equation}
Here, $\Sn_+$ and $\Sn_-$ are the sets of positive and negative semidefinite matrices.
Clearly, every elliptic operator $F(X,x)$ is elliptic at 0.

\cite{MR2487853} introduces the notion of \emph{duality}. The dual of a subset $\Theta\subseteq \Sn $ is defined as
\begin{equation}
\widetilde{\Theta} := - (\Sn \setminus\Theta^\circ) = \left\{X\in \Sn \;|\; -X\notin \Theta^\circ\right\}.
\label{eq:dual}
\end{equation}
It preserves properness, closedness, and positive ellipticity.
By ellipticity at 0, and with our assumption
\[\Gamma(x) := \partial\Theta_-(x) = \partial\Theta_+(x)\neq\emptyset,\]
we shall show that the closures $\oT_+(x)$ and $-\oT_-(x)$
are proper and positive elliptic duals.
We do not deal with existence in this paper, and it does not affect the validity of the comparison principle that
\[\Gamma_0(x) := \left\{X\in \Sn \;|\; F(X,x) = 0\right\}\]
can be empty. The more relevant object is $\Gamma(x)$, which we, somewhat inaccurately, call 
\emph{the null-level set} of $F$ at $x$.

Besides being discontinuous, we allow the operator to take values in the extended real line $\oR := \mR\cup\{-\infty,\infty\}$.
This is really just a practical consideration in that we do not have to care about constructions like \emph{elliptic branches} or \emph{admissible solutions}. Instead we can extend $F$ to be infinite in the non-elliptic or 'uninteresting' regions of $\Sn \times\Omega$.
Thus, equations with up to two constrains
\begin{equation}\label{eq:system_with_constrains}
F(\cH w,x) = 0,\qquad \cH w\notin \Psi(x),\,\cH w\notin \Phi(x),
\end{equation}
can be integrated into our general framework provided $\Psi$ and $\Phi$ are negative- and positive elliptic maps, respectively, with $\Psi(x)\cap\Phi(x)=\emptyset$. The system \eqref{eq:system_with_constrains} is then reformulated as $F'(\cH w,x)=0$, where $F'$ is defined on $\Sn\times\Omega$ as
\[F'(X,x) :=
\begin{cases}
-\infty,\qquad &\text{if $X\in \Psi(x)$,}\\
\infty,\qquad &\text{if $X\in \Phi(x)$,}\\
F(X,x), &\text{otherwise,}
\end{cases}\]


We end the Introduction with some words about our notation. Unless otherwise stated, $\Omega$ denotes an open and bounded subset of Euclidean space $\mR^n$.
The second order partial derivatives of a function $w\in C^2(\Omega)$ are gathered in the Hessian matrix $\cH w\colon\Omega\to \Sn $, and the equations are expressed as $F(\cH w,x) = 0$ instead of the more cumbersome $F(\cH w(x),x) = 0$.
The inequality $X<Y$ ($X\leq Y$) is the standard partial ordering in the space $\Sn $ of symmetric $n\times n$ matrices and means that $Y-X$ is positive (semi)definite. Moreover, $\Sn $ is equipped with the inner product $\tr(XY)$ and the induced norm is thus
\[\lVert X\rVert := \lVert X\rVert_2 := \sqrt{\tr(X^2)} = \sqrt{\lambda_1^2(X) + \cdots + \lambda_n^2(X)}\]
where $\lambda_1(X)\leq\cdots\leq\lambda_n(X)$ are the eigenvalues of $X$.
However, in \eqref{eq:Tddef}, \eqref{eq:distdef}, and thus implicitly in \eqref{eq:cond}, we use the \emph{operator norm}
\[|X| := \lVert X\rVert_\infty := \max\{-\lambda_1(X),\lambda_n(X)\}.\]
It is the better match to ellipticity.
We have
\[|X| \leq \lVert X\rVert \leq \lVert X\rVert_1\leq \sqrt{n}\lVert X\rVert \leq n|X|,\]
where $\lVert X\rVert_1 := \sum_{i=1}^n|\lambda_i(X)|$ is the \emph{trace norm}.
Next, $USC(\oO)$ and $LSC(\oO)$ are the spaces of upper- and lower semicontinuous functions on the closure of $\Omega$, respectively. For an account on distance functionals on metric spaces, like the excess and the Hausdorff distance, we refer to the book \cite{MR1269778}. The definition of viscosity solutions can be found in \cite{MR1118699}, which will also be our main reference to the subject.

\section{Examples of application}\label{sec:examples}

Suppose that an operator $F$ satisfies the classical structure condition \eqref{eq:classical_structure_cond} and that
\begin{equation}
F(X+\tau I,x) - F(X,x) \geq \ell(\tau),\qquad x\in\Omega,\,\tau\geq 0,\,X\in\Sn,
\label{eq:ntg}
\end{equation}
for some strictly increasing $\ell\geq 0$.
If $v$ is a subsolution to the equation $F(\cH w,x) = 0$, then $v(x) + \frac{\tau}{2}|x|^2$ is a strict subsolution for every $\tau>0$ and comparison is obtained from the proof of Theorem 3.3 in \cite{MR1118699}, taking into account the comments in Section 5.C there. A version of this result is explicitly stated in
\cite{MR2246004} where an operator is called \emph{non-totally degenerate} if there is a constant $\ell>0$ such that
$F(X+\tau I,\cdots) - F(X,\cdots) \geq \ell \tau$.
By ellipticity, note that \eqref{eq:ntg} is weaker than a \emph{directional non-degeneracy} $F(X+\tau \xi\xi^T,x) - F(X,x) \geq \ell(\tau)$ that is used by some authors.

As shown below, our condition \eqref{eq:cond} follows very easily from \eqref{eq:classical_structure_cond} and \eqref{eq:ntg}, thereby providing a simple proof of the comparison principle. Observe that \eqref{eq:classical_structure_cond} is only required to hold in compact subsets of $\Omega$. This improvement is well known, although an exact reference is hard to find.

%

\begin{proposition}[Non-totally degenerate case]\label{prop:BM}
Let $F\colon \Sn \times\Omega\to\mR$ be elliptic at 0 and satisfy \eqref{eq:ntg}. If there is a modulus of continuity $\omega$ such that
\begin{equation}\label{eq:ntg_cont_cond}
F(X,x) - F(Y,y) \leq \omega\Big(\alpha|x-y|^2 + |x-y| \Big)
\end{equation}
whenever $\alpha>0$, $x,y$ is in a compact subset of $\Omega$, and
\[
-\alpha\begin{bmatrix}
I & 0\\ 0 & I
\end{bmatrix} \leq	\begin{bmatrix}
	X & 0\\ 0 & -Y
	\end{bmatrix} \leq \alpha  \begin{bmatrix}
	I & -I\\ -I & I
	\end{bmatrix},
	\label{eq:str}
\]
then the comparison principle holds for the equation $F(\cH w,x) = 0$ in $\Omega$.
\end{proposition}

\begin{proof}
$\emptyset\neq\partial\Theta_+= \partial\Theta_-$ in $\Omega$ because of \eqref{eq:ntg}. We take the standard approach as described in the Introduction.

Since $\ell$ is strictly increasing, it has a non-decreasing inverse $\tau(s) := \ell^{-1}(s)$ with $\tau(0^+) = 0$. 
Thus, with $s := -F(Z,y) > 0$ we get
\[F\left( Z + \tau(s)I,y\right) \geq \ell(\tau(s)) + F(Z,y) = 0,\]
and $Z + \tau(s)I\in\Theta_+(y)$.
By Corollary \ref{cor:matrixineq}, we find that $X,Z$ satisfies \eqref{eq:str} with $\alpha = 1/\delta = t/|x-y|^2 > |X|$.
As $F(X,x)\geq 0$, $x,y\in B_{t}(x_0)\subset\subset\Omega$, it follows that
\begin{align*}
\dist\left( Z, \Theta_+(y) \right)
	&\leq \tau\Big(-F\left(Z,y\right)\Big)\\
	&\leq \tau\Big(F(X,x)-F\left(Z,y\right)\Big)\\
	&\leq \tau\Big( \omega\Big(\frac{1}{\delta}|x-y|^2 + |x-y| \Big) \Big)\\
	&\leq (\tau\circ \omega) (3t),
	\end{align*}
	which goes to zero when $t\to 0$.
\end{proof}



The strictly elliptic case is considered in e.g. \cite{MR2084272}. By applying Theorem III.1(1) in \cite{MR1031377} on compact subsets of $\Omega$, and perturbing to strict sub- or supersolutions, it is clear that Proposition 3.8 in \cite{MR2084272} can be improved: The ellipticity constant $\lambda>0$ may be allowed to depend on $x$ as long as it is \emph{locally} bounded away from 0.
For example, one may require that there exists a positive $\lambda\in LSC(\Omega)$ such that
\begin{equation}\label{eq:strict_elliptic}
F(Y,x) - F(X,x) \geq \lambda(x)\tr(Y-X)
\end{equation}
for all $x\in\Omega$ and all $X,Y\in \Sn $ with $X\leq Y$.

Apart from the $x$-dependence one should note that \eqref{eq:strict_elliptic} is considerably more restrictive than \eqref{eq:ntg}. On the other hand, the continuity condition \eqref{eq:strict_cont_cond} now required for $F$ is weaker than \eqref{eq:ntg_cont_cond} in Proposition \ref{prop:BM}.

\begin{proposition}[Strictly elliptic case]\label{prop:strictly}
Let $F\colon \Sn \times\Omega\to\mR$ satisfy \eqref{eq:strict_elliptic}. If there is a modulus of continuity $\omega$, so that
\begin{equation}\label{eq:strict_cont_cond}
\left|F(X,x)-F(X,y)\right| \leq \omega\Big( (\lVert X\rVert + 1)|x-y| \Big)
\end{equation}
whenever $X\in \Sn$ and $x,y$ is in a compact subset of $\Omega$,
then the comparison principle holds for the equation $F(\cH w,x) = 0$ in $\Omega\subseteq\mR^n$.
\end{proposition}

In order to obtain step 3) in the standard approach, the inequality
\begin{equation}
X(I-\delta X)^{-1} \geq X + \frac{\delta}{2}X^2,\qquad  \delta\geq 0,\,\delta |X|<1, 
\label{eq:Xdineq}
\end{equation}
will be convenient to have at hand. See \eqref{eq:T_firstorder_approx}.

\begin{proof}
$F$ is elliptic at 0 and $\emptyset\neq\partial\Theta_+(x)= \partial\Theta_-(x)$ by the strict ellipticity.

The equation $F(Z+\tau I,y) = 0$ yields
\[0 = F(Z+\tau I,y)\\
\geq F(X,y) + \lambda(y)\tr(Z+\tau I- X),\]
which implies
\begin{align*}
n\lambda(y)\tau
	&\leq - F(X,y) - \tr(Z-X)\\
	&\leq F(X,x) -F(X,y) - \lambda(y)\tr\left(\frac{\delta}{2} X^2\right), &&\text{(by \eqref{eq:Xdineq})}\\
	&\leq \omega\Big( (\lVert X\rVert + 1)|x-y| \Big) - \lambda(y)\frac{|x-y|^2}{2t}\lVert X\rVert^2.
\end{align*}
The modulus $\omega$ may be assumed to be subadditive, and for $\epsilon>0$ there is always a constant $M_\epsilon>0$ such that $\omega(r)\leq \epsilon + M_\epsilon r$. When writing $r := \lVert X\rVert |x-y|$, completing the square, and using that $|x-y|<2t$, the above is bounded by
\begin{align*}
\omega(|x-y|) + \omega(r) - \frac{\lambda(y)}{2t}r^2
	&\leq \omega(2t) + \epsilon + M_\epsilon r - \frac{\lambda(y)}{2t}r^2\\
	&\leq \omega(2t) + \epsilon + \frac{M_\epsilon^2}{2\lambda(y)}t.
\end{align*}
Thus \eqref{eq:cond} holds since $\lim\sup_{y\to x_0}\lambda^{-1}(y)\leq \lambda^{-1}(x_0)$, and
\[
\lim_{t\to 0^+}\sup_{\substack{x,y\in B_t(x_0)\\ Z\in\Theta^\delta_+(x)}} \dist\left( Z, \Theta_+(y) \right)\\
	\leq \lim_{t\to 0^+}\sup_{y\in B_t(x_0)} \frac{\omega(2t) + \epsilon}{n\lambda(y)} + \frac{M_\epsilon^2}{2n\lambda^2(y)}t \leq \frac{\epsilon}{n\lambda(x_0)}
\]
for every $\epsilon>0$.
\end{proof}

With a given parameter $R>0$, the \emph{bounded Hausdorff distance} between subsets $\Theta_1$ and $\Theta_2$ of $\Sn $ is
\[\dd_R(\Theta_1,\Theta_2) := \sup_{|X| < R}\left|\dist(X,\Theta_1) - \dist(X,\Theta_2)\right|.\]
Since, obviously, $\dd_R\leq \dd_\infty$, the result of \cite{MR3677871} is a direct consequence of our next application of Theorem \ref{thm}. It presents a condition intermediate to \eqref{eq:Hausdorff_cond} and \eqref{eq:cond}.

\begin{proposition}\label{prop:hausdorff}
Let $F\colon \Sn \times\Omega \to \oR$ be elliptic at 0. If for each $x_0\in\Omega$, $\emptyset \neq\partial\Theta_+(x_0)= \partial\Theta_-(x_0)$ and
\begin{equation}
\lim_{t\to 0^+}\sup_{x,y\in B_t(x_0)} \dd_{t|x-y|^{-2}}\Big(\Theta_+(x),\Theta_+(y)\Big) = 0,
\label{eq:int}
\end{equation}
then the comparison principle holds for the equation $F(\cH w,x) = 0$ in $\Omega$.
\end{proposition}

\begin{proof}
The bounded Hausdorff distance can be alternatively expressed as
\[\dd_R(\Theta_+(x),\Theta_+(y)) = \max\left\{\sup_{\substack{X\in\Theta_+(x)\\\ |X| < R}}\dist(X,\Theta_+(y)),\; \sup_{\substack{Z\in\Theta_+(y)\\ |Z| < R}}\dist(Z,\Theta_+(x))\right\},\]
and in the next Section we will show that the negative of the distance to $\Theta_+(y)$ is an elliptic function. That is, $\dist\left( X(I-\delta X)^{-1}, \Theta_+(y) \right) \leq \dist\left( X, \Theta_+(y) \right)$ since $X(I-\delta X)^{-1}\geq X$.
Thus,
\begin{align*}
\lim_{t\to 0^+}\sup_{\substack{x,y\in B_t(x_0)\\ Z\in\Theta^\delta_+(x)}} \dist\left( Z, \Theta_+(y) \right)
	&\leq \lim_{t\to 0^+}\sup_{x,y\in B_{t}(x_0)}\sup_{\substack{X\in \Theta_+(x)\\ |X| < \frac{t}{|x-y|^2}}} \dist\left( X, \Theta_+(y) \right)\\
	&\leq \lim_{t\to 0^+}\sup_{x,y\in B_{t}(x_0)}\dd_{t|x-y|^{-2}}\left(\Theta_+(x),\Theta_+(y)\right) = 0.
\end{align*}
\end{proof}

\begin{example}\label{ex:determinat_eq_not_classical}
Let $M\colon\Omega\to\Sn$ and $f\colon\Omega\to[0,\infty)$. \cite{MR3677871} consider the perturbed Monge-Ampère operator
\[F(X,x) := \begin{cases}
\det(X+M(x)) - f(x),\qquad & \text{if $X+M(x)\geq 0$},\\
-\infty, &\text{otherwise.}
\end{cases}\]
It satisfies the Hausdorff continuity condition \eqref{eq:Hausdorff_cond} for uniformly continuous data, and it satisfies \eqref{eq:int} when $M$ and $f$ are continuous. The comparison principle thus holds for the equation $F(\cH w,x) = 0$.
However, the classical structure condition does not hold and the equation is therefore not covered by the techniques presented in \cite{MR1118699}. In particular, it is not covered by Proposition \ref{prop:BM}. For convenience, we repeat a special case of their counterexample (Remark 5.10 \cite{MR3677871}) here.

Let $\Omega\subseteq\mR^2$ be an open and bounded set containing the origin. Define $X_k := \bigl[\begin{smallmatrix}
0 & 0\\ 0 & k/2
\end{smallmatrix}\bigr]$, $Z_k := T_{1/k}(X_k) = \bigl[\begin{smallmatrix}
0 & 0\\ 0 & k
\end{smallmatrix}\bigr]$, and $x_k = \frac{2}{k}e$ where $e$ is a unit vector in $\mR^2$. Now, $k|x_k-0|^2 + |x_k-0| \overset{k\to\infty}{\to} 0$ but if we let $M(x) := \bigl[\begin{smallmatrix}
|x| & 0\\ 0 & 0
\end{smallmatrix}\bigr]$ then
\begin{align*}
F(X_k,x_k) - F(Z_k,0)
	&= \det(X_k+M(x_k)) - \det(Z_k+M(0)) + f(0) - f(x_k)\\
	&\geq 1 - 0 - \omega_f(2/k) \to 1 > 0.
\end{align*}

\end{example}

The condition \eqref{eq:int} is more general than \eqref{eq:Hausdorff_cond}, but it is still not capable to handle linear equations in a satisfactory way. Indeed, given an equation $\tr(A(x)\cH w) = 0$, where we assume for simplicity that $\tr A(x)\equiv 1$, one can show that the bounded Hausdorff distance between two superlevel sets is $\dd_R(\Theta_+(x),\Theta_+(y)) = R| A(x)-A(y)|$. Thus, for $t>0$,
\[\sup_{x,y\in B_{t}(x_0)} \dd_{t|x-y|^{-2}}\Big(\Theta_+(x),\Theta_+(y)\Big) = t\sup_{x,y\in B_{t}(x_0)}\frac{| A(x)-A(y)|}{|x-y|^2},\]
which cannot be assumed to be finite for all $x_0$ unless $A$ is constant.

In order to prove the linear case with our approach we need a little lemma.

\begin{lemma}\label{lem:sm}
Let $X\in \Sn $ and $\delta>0$. If
$\delta X < I$, then
\[Q_1^TXQ_1 - Q_2^TX\left(I-\delta X\right)^{-1}Q_2 \leq \frac{1}{\delta} (Q_1-Q_2)^T(Q_1-Q_2)\]
in $\mathcal{S}^m$ for all $n\times m$ matrices $Q_1,Q_2$. ($m\in\mathbb{N}$).
\end{lemma}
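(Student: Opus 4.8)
The plan is to reduce the asserted inequality in $S(m)$ to a purely vectorial one in $\mR^n$ and then to recognise the latter as a complete square. Since $P\leq R$ in $S(m)$ is equivalent to $\langle P\xi,\xi\rangle\leq\langle R\xi,\xi\rangle$ for every $\xi\in\mR^m$, and $\langle Q^TMQ\,\xi,\xi\rangle=\langle M(Q\xi),Q\xi\rangle$, it suffices, writing $p:=Q_1\xi$ and $q:=Q_2\xi$, to prove
\[
\langle Xp,p\rangle-\big\langle X(I-\delta X)^{-1}q,\,q\big\rangle\;\leq\;\tfrac1\delta|p-q|^2\qquad\text{for all }p,q\in\mR^n.
\]

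Next I would collect the elementary facts about $A:=I-\delta X$. The hypothesis $\delta|X|<1$ in the operator norm means $-1<\delta\lambda_i(X)<1$ for every $i$, so the spectrum of $A$ lies in $(0,2)$; in particular $A$ is positive definite, invertible, and has a positive definite square root $A^{1/2}$. As $X$ commutes with $A$, it commutes with $A^{-1}$, so $Y:=X(I-\delta X)^{-1}=XA^{-1}=A^{-1}X$ is symmetric. The computation that drives the proof is the identity $I+\delta Y=I+\delta XA^{-1}=(A+\delta X)A^{-1}=A^{-1}$, since $A+\delta X=I$; equivalently $\delta X=I-A$ and $\delta Y=A^{-1}-I$.

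Finally, multiply the vectorial inequality above by $\delta>0$ and move everything to one side. Using $\delta X=I-A$ and $\delta Y=A^{-1}-I$, the quantity $|p-q|^2-\delta\langle Xp,p\rangle+\delta\langle Yq,q\rangle$ simplifies to
\[
\langle Ap,p\rangle-2\langle p,q\rangle+\langle A^{-1}q,q\rangle=|A^{1/2}p|^2-2\big\langle A^{1/2}p,\,A^{-1/2}q\big\rangle+|A^{-1/2}q|^2=\big|A^{1/2}p-A^{-1/2}q\big|^2\geq 0,
\]
which is exactly the claim. There is no genuine obstacle here; the only point that needs a moment's thought is spotting the identity $I+\delta X(I-\delta X)^{-1}=(I-\delta X)^{-1}$ that makes the square appear. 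Should one prefer to avoid it, one may instead diagonalise $X$ in an orthonormal eigenbasis, which turns the vectorial inequality into the scalar one $\lambda p^2-\tfrac{\lambda}{1-\delta\lambda}q^2\leq\tfrac1\delta(p-q)^2$ for each eigenvalue $\lambda$ of $X$ (all satisfying $|\delta\lambda|<1$); multiplied by $\delta$ and rearranged this reads $(1-\delta\lambda)\big(p-\tfrac{q}{1-\delta\lambda}\big)^2\geq 0$, and summing over eigencoordinates recovers the matrix statement.
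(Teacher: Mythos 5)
Your proof is correct. It does, however, take a different route from the paper: there the lemma is dispatched in one line by taking the block inequality of Corollary \ref{cor:matrixineq},
\[
\begin{bmatrix} X & 0\\ 0 & -X(I-\delta X)^{-1} \end{bmatrix} \leq \frac{1}{\delta}\begin{bmatrix} I & -I\\ -I & I \end{bmatrix},
\]
and conjugating it with $\bigl[\begin{smallmatrix} Q_1 \\ Q_2 \end{smallmatrix}\bigr]$, so the real work is outsourced to Proposition \ref{prop:matrixineq} (whose ``if'' direction is proved by optimizing a quadratic in $\xi$ at $\xi=(I-\epsilon X)^{-1}\eta$) together with the resolvent monotonicity $X(I-\epsilon X)^{-1}\leq X(I-\delta X)^{-1}$ for $\epsilon<\delta$. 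You instead test the asserted inequality in $S(m)$ against vectors, reduce to the quadratic-form inequality $\langle Xp,p\rangle-\langle X(I-\delta X)^{-1}q,q\rangle\leq \tfrac1\delta|p-q|^2$, and exhibit the difference as the complete square $\bigl|A^{1/2}p-A^{-1/2}q\bigr|^2$ with $A=I-\delta X$ (or, equivalently, the scalar square after diagonalizing $X$). The underlying algebra is essentially the same -- your completed square is exactly what the paper's optimal choice of $\xi$ produces -- but your argument is self-contained and more elementary, making the positivity transparent without the $2n\times 2n$ machinery; the paper's version is shorter in context because Corollary \ref{cor:matrixineq} is needed elsewhere (Propositions \ref{prop:lin} and \ref{prop:BM}) and is therefore already on the table, and it also records the sharper equivalence with the structure condition \eqref{eq:str} that the proof of the Theorem uses in the ``only if'' direction.
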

Its proof consists of multiplying the inequality in Corollary \ref{cor:matrixineq} from the left and right by $\bigl[\begin{smallmatrix}
Q_1^T & Q_2^T
\end{smallmatrix}\bigr]$ and $\bigl[\begin{smallmatrix}
Q_1 \\ Q_2
\end{smallmatrix}\bigr]$, respectively.

\begin{proposition}[Linear elliptic case]\label{prop:lin}
The comparison principle holds for the equation
\[\tr(A(x)\cH w) = f(x) \qquad\text{in $\Omega$}\]
whenever $A\in C(\Omega;\Sn_+ \setminus\{0\})$, $f\in C(\Omega)$,
and whenever there is a locally Lipschitz $Q\colon\Omega\to \mR^{n\times m}$ such that
\begin{equation}
\frac{A(x)}{\|A(x)\|_1} = Q(x)Q^T(x).
\label{eq:lincon}
\end{equation}
\end{proposition}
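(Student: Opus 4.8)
The plan is to verify that the operator $F(X,x):=\tr(A(x)X)-f(x)$ satisfies the three hypotheses of the Theorem: ellipticity at $0$, non-emptiness of $\partial\Theta_+(x_0)=\partial\Theta_-(x_0)$, and the condition \eqref{eq:cond}. Since \eqref{eq:lincon} presupposes $\|A(x)\|_1=\tr A(x)>0$, we may divide the equation by $\tr A(x)$ and assume henceforth that $\tr A(x)\equiv 1$; then \eqref{eq:lincon} reads $A(x)=Q(x)Q^T(x)$, with $Q$ still locally Lipschitz and $f$ still continuous. As $A(x)\in S_+(n)$, $F$ is elliptic, in particular elliptic at $0$; and for each $x_0$ the sets $\Theta_\pm(x_0)$ are the two complementary closed affine half-spaces $\{X:\tr(A(x_0)X)\ge f(x_0)\}$ and $\{X:\tr(A(x_0)X)\le f(x_0)\}$, whose common boundary is the hyperplane $\{X:\tr(A(x_0)X)=f(x_0)\}\neq\emptyset$. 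Thus only \eqref{eq:cond} remains, and by the duality remark it suffices to establish \eqref{1}, because \eqref{2} for $F$ is \eqref{1} for the dual operator $(X,x)\mapsto -F(-X,x)=\tr(A(x)X)+f(x)$, which meets the present hypotheses with $f$ replaced by $-f$.

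Fix $x_0\in\Omega$ and choose $r>0$ with $\overline{B_r(x_0)}\subseteq\Omega$; let $\omega$ be a modulus of continuity of $f$ on $\overline{B_r(x_0)}$ and $L$ a Lipschitz constant of $Q$ there, so $\tr\!\big((Q(x)-Q(y))^T(Q(x)-Q(y))\big)\le L^2|x-y|^2$ for $x,y\in B_r(x_0)$. Take $0<t<r/2$, points $x,y\in B_t(x_0)$ (hence $|x-y|<2t$), set $\delta:=|x-y|^2/t$, and let $X\in\Theta_+(x)$ with $\delta|X|<1$ and $Z:=X(I-\delta X)^{-1}$. If $x=y$ then $\delta=0$, $Z=X\in\Theta_+(x)=\Theta_+(y)$, so $\dist(Z,\Theta_+(y))=0$; assume therefore $x\neq y$, i.e.\ $\delta>0$. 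Putting $d:=\big(f(y)-\tr(A(y)Z)\big)^{+}\ge 0$ and using $\tr A(y)=1$, we get $\tr\!\big(A(y)(Z+dI)\big)=\tr(A(y)Z)+d\ge f(y)$, i.e.\ $Z+dI\in\Theta_+(y)$, hence
\[\dist\big(Z,\Theta_+(y)\big)\ \le\ |{-dI}|\ =\ d\ =\ \big(f(y)-\tr(A(y)Z)\big)^{+}.\]
Now decompose
\[f(y)-\tr(A(y)Z)=\big[f(y)-f(x)\big]+\big[f(x)-\tr(A(x)X)\big]+\big[\tr(A(x)X)-\tr(A(y)Z)\big],\]
where the first bracket is $\le\omega(2t)$ and the second is $\le 0$ since $X\in\Theta_+(x)$.

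The last bracket is the heart of the matter. By cyclicity of the trace and $A=QQ^T$ one has $\tr(A(x)X)=\tr\!\big(Q^T(x)\,X\,Q(x)\big)$ and $\tr(A(y)Z)=\tr\!\big(Q^T(y)\,X(I-\delta X)^{-1}\,Q(y)\big)$, so Lemma \ref{lem:sm} with $Q_1=Q(x)$, $Q_2=Q(y)$, followed by taking traces in $S(m)$ (the trace being monotone), gives
\[\tr(A(x)X)-\tr(A(y)Z)\ \le\ \tfrac{1}{\delta}\,\tr\!\big((Q(x)-Q(y))^T(Q(x)-Q(y))\big)\ \le\ \tfrac{t}{|x-y|^2}\cdot L^2|x-y|^2\ =\ L^2 t .\]
Combining the three estimates, $\dist(Z,\Theta_+(y))\le\omega(2t)+L^2t$ uniformly over all admissible $x,y,X$, so the supremum in \eqref{1} over $x,y\in B_t(x_0)$ and $Z\in\Theta_+^{\delta}(x)$ is $\le\omega(2t)+L^2t\to 0$ as $t\to 0^{+}$. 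This proves \eqref{1}; applying the same argument to the dual operator $\tr(A(x)X)+f(x)$ gives \eqref{2}, hence \eqref{eq:cond}, and the Theorem yields the comparison principle.

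The only genuine obstacle is the term $\tr(A(x)X)-\tr(A(y)Z)$: the crude bound $|\tr(A(x)X)-\tr(A(y)X)|\le\|A(x)-A(y)\|\,\|X\|$ is worthless because, under $\delta|X|<1$, $\|X\|$ may be of order $1/\delta=t/|x-y|^2$, which blows up as $y\to x$ — this is precisely why the intermediate condition \eqref{eq:int} fails for non-constant linear operators. The cure is to pair the perturbation $X\mapsto X(I-\delta X)^{-1}$ with the factorization $A=QQ^T$ via Lemma \ref{lem:sm}, which replaces the $X$-dependent bound by the $X$-free quantity $\delta^{-1}\tr\!\big((Q(x)-Q(y))^T(Q(x)-Q(y))\big)$; local Lipschitz continuity of $Q$ then makes this of size $L^2 t$, and the argument closes. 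Everything else is routine.
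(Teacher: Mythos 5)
Your proposal is correct and follows essentially the same route as the paper: verify ellipticity at $0$ and the coincidence of the boundaries, reduce to \eqref{1} by duality, and then bound $\dist(Z,\Theta_+(y))$ by combining the modulus of continuity of the normalized $f$ with Lemma \ref{lem:sm} applied to $Q_1=Q(x)$, $Q_2=Q(y)$, yielding the $X$-free bound $\delta^{-1}\lVert Q(x)-Q(y)\rVert^2\le L^2t$. The only cosmetic differences are that you normalize $\tr A\equiv 1$ at the outset and estimate the distance by translating $Z$ by $dI$ into $\Theta_+(y)$, whereas the paper keeps the $\hat A,\hat f$ notation and invokes the exact Ascoli formula for the distance to the hyperplane; these are interchangeable.
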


Again the result can be deduced from the ideas of Section 5.C in \cite{MR1118699}, but it does not seem to have been explicitly stated before.

As shown in Section \ref{sec:lin} the requirement $A(x)\neq 0$ is necessary when the equation is without first- and zeroth order terms. In addition, the comparison principle may fail also if $Q$ is only Hölder continuous.

It is known that the existence of a Lipschitz decomposition $A=QQ^T$ is enough in order for the equation to satisfy the structure condition \eqref{eq:classical_structure_cond}. 
When using the standard approach, the variant \eqref{eq:lincon} appears in the proof. It is a tad weaker because any non-Lipschitz behaviour in the normal direction of $A$ is cancelled out. That is, the Proposition may apply to the equation $\tr(A(x)\cH w) = f(x)$ even if $x\mapsto \tr A(x) = \|A(x)\|_1$ is not Lipschitz -- which is rather natural in view of the equivalent equation $\tr((A(x)/\|A(x)\|_1)\cH w) = f(x)/\|A(x)\|_1$. Observe that only in this form the equation satisfy the non-degeneracy \eqref{eq:ntg} of Proposition \ref{prop:BM}.

\begin{proof}
As $A(x)\geq 0$ the equation is elliptic, and $\emptyset\neq\partial\Theta_+(x)= \partial\Theta_-(x)$ because  $A(x)\neq 0$.

The distance from a point to the level set hyperplane of a linear function is provided by the \emph{Ascoli formula},
\[\tau := \dist(Z,\Theta_+(y)) = \dist(Z,\Gamma(y)) = \frac{\left|\tr(A(y)Z) - f(y)\right|}{\|A(y)\|_1}.\]
Writing $\hat{A}(x) := \frac{A(x)}{\|A(x)\|_1}$ and $\hat{f}(x) := \frac{f(x)}{\|A(x)\|_1}$, and using that
$\tr (A(y)Z) - f(y)< 0\leq \tr (A(x)X) - f(x)$ in the standard approach, the Lemma yields
\begin{align*}
\tau &= -\tr\left(\hat{A}(y)Z\right) + \hat{f}(y)\\
	 &\leq \tr\left(\hat{A}(x)X\right) - \hat{f}(x) -\tr\left(\hat{A}(y)Z\right) + \hat{f}(y)\\
	 &= \tr\Big(Q^T(x)XQ(x)  - Q^T(y)ZQ(y)\Big) + \hat{f}(y) - \hat{f}(x)\\
	 &\leq \frac{1}{\delta}\left\lVert Q(x) - Q(y)\right\rVert^2 + \omega(|x-y|)
\end{align*}
where $\omega$ is the modulus of continuity of $\hat{f}$ in $\oB_{t_0}(x_0)$. Since $\delta = |x-y|^2/t$, the first term on the right-hand side is bounded by $L^2 t$ where $L$ is the Lipschitz constant of $Q$ in $\oB_{t_0}(x_0)$.
Thus,
\begin{equation}\label{eq:linear_conclusion}
\lim_{t\to 0^+}\sup_{\substack{x,y\in B_t(x_0)\\ Z\in\Theta^\delta_+(x)}} \dist\left( Z, \Theta_+(y) \right) \leq \lim_{t\to 0^+} L^2 t + \omega(2t) = 0.
\end{equation}
\end{proof}

The proof for operators on the form $\bfa^T(x)\overline{\lambda}(X) - f(x)$ is almost identical: Write $\bfa(x) = [a_1(x),\dots,a_n(x)]^T$, $|\bfa(x)|_1 := \sum_{i=1}^n |a_i(x)|$, and $\mR^n_+ := \{z\in\mR^n\;|\;z_i\geq 0\}$.

\begin{proposition}\label{prop:lineig}
	The comparison principle holds for the equation
\begin{equation}\label{eq:eigenval_eq}
\sum_{i=1}^n a_i(x)\lambda_i(\cH w) = f(x) \qquad\text{in $\Omega$}
\end{equation}
	whenever $\bfa\in C(\Omega;\mR^n_+\setminus\{0\})$, $f\in C(\Omega)$,
	and whenever the square roots of
\[\hat{a}_i(x) := \frac{a_i(x)}{|\bfa(x)|_1},\qquad i = 1,\dots,n,\]
	are locally Lipschitz in $\Omega$.
\end{proposition}

Put differently, comparison holds for every continuous $f$ when each $a_i\geq 0$ is locally Lipschitz, not all being zero at once, and if $a_i(x_0) = 0$ then $a_i(x) = \mathcal{O}(|x-x_0|^2)$ as $x\to x_0$.
\begin{proof}
	The equation is elliptic as $a_i\geq 0$ and because $X\mapsto\lambda_i(X)$ is elliptic (Corollary III.2.3, \cite{MR1477662}). Furthermore, $\emptyset\neq\partial\Theta_+(x)= \partial\Theta_-(x)$ because $\bfa(x)\neq 0$.

The standard approach yields
\[0 = F(Z+\tau I,y) = \sum_{i=1}^n a_i(y)(\lambda_i(Z) + \tau) - f(y) = F(Z,y) + |\bfa(y)|_1\tau,\]
so
\begin{align*}
\tau &= -\frac{1}{|\bfa(y)|_1}F(Z,y)\\
	 &\leq \frac{1}{|\bfa(x)|_1}F(X,x) -\frac{1}{|\bfa(y)|_1}F(Z,y)\\
	 &= \sum_{i=1}^n\hat{a}_i(x)\lambda_i(X) - \hat{a}_i(y)\lambda_i(Z) + \hat{f}(y) - \hat{f}(x),\qquad \hat{f}:= f/|\bfa|_1,\\
	 &\leq \frac{1}{\delta}\sum_{i=1}^n \left(\sqrt{\hat{a}_i(x)} - \sqrt{\hat{a}_i(y)}\right)^2 + \omega_{\hat{f}}(|x-y|)
\end{align*}
by the case $n=m=1$ of Lemma \ref{lem:sm}. The conclusion is \eqref{eq:linear_conclusion} with $L^2 := \sum_{i=1}^n L_i^2$ where $L_i$ is the Lipschitz constant of $\sqrt{\hat{a}_i}$.
\end{proof}

%

We now turn to the \emph{semiautonomous} case. That is, when the dependence of the operator in $X$ and $x$ can be separated as $F(X) - f(x)$.
In the simplest situation $f\equiv 0$, we observe that the condition \eqref{eq:cond} is automatically fulfilled since the sub- and superlevel sets
\[\Theta_- := \left\{X\in \Sn \;|\; F(X)\leq 0\right\},\quad \Theta_+ := \left\{X\in \Sn \;|\; F(X)\geq 0\right\},\]
are constant.
Indeed, ellipticity at 0 and the inequality $X(I-\delta X)^{-1}\geq X$ yields $\Theta_+^\delta\subseteq\Theta_+$ so
$\Theta_+^{\delta}$ does not exceed $\Theta_+$ for any $\delta\geq 0$.
We are only left with the requirement $\partial\Theta_- = \partial\Theta_+ \neq\emptyset$. But that is, as we shall see, a necessary condition as well. It is safe to say that if you are not able to provide an immediate counterexample, then the comparison principle \emph{always} holds for autonomous equations $F(\cH w) = 0$ elliptic at 0. It seems that this is the only case where the problem of comparison is \emph{solved}. That is, the sufficient conditions are proved to also be necessary. This was essentially achieved in \cite{MR2487853} although the necessity of their condition was not pointed out.

\begin{proposition}[Autonomous case]\label{thm:2}
Let $0\not\equiv F\colon \Sn \to\oR$ be elliptic at 0. Then the following are equivalent.
\begin{enumerate}[(i)]
	\item The comparison principle holds for the equation \[F(\cH w) = 0\] in any open and bounded $\Omega\subseteq\mR^n$.
	\item The set $\Gamma_0 := \left\{X\in \Sn \;|\; F(X) = 0\right\}$ does not contain an open ball.
	\item For all $X\in \Sn $,
	\[\sup\{t\;|\; X+tI\in\Theta_-\} = \inf\{t\;|\; X+tI\in\Theta_+\}.\]
	\item $F(X_0-tI)<0<F(X_0+tI)$ for all $X_0\in\Gamma_0$ and all $t>0$.
	\item The sets $\oT_+$ and $-\oT_-$ are duals. (See \eqref{eq:dual})
	\item $\partial\Theta_- = \partial\Theta_+$.	
\end{enumerate}
\end{proposition}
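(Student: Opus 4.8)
The strategy is to prove the implications in a cycle, exploiting the fact that for autonomous $F$ the only hypothesis left to check in the main Theorem is $\partial\Theta_-=\partial\Theta_+\neq\emptyset$. I would organize the chain as $(i)\Rightarrow(ii)\Rightarrow(iii)\Rightarrow(iv)\Rightarrow(vi)\Rightarrow(v)\Rightarrow(\text{Theorem's hypotheses})\Rightarrow(i)$, folding the work into a few natural pieces. Several of these steps are elementary manipulations of the sub/superlevel sets together with ellipticity at $0$; the genuinely substantive links are $(i)\Rightarrow(ii)$ (one must build an explicit counterexample to comparison when $\Gamma_0$ has an interior point) and the final return to $(i)$, which is where the main Theorem is invoked.

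\textbf{The easy structural equivalences.} First observe that $\Theta_+=\{F\ge 0\}$, $\Theta_-=\{F\le 0\}$, $\Gamma_0=\Theta_+\cap\Theta_-$, and by ellipticity at $0$ the ``level'' $t\mapsto F(X+tI)$ is, along each ray $X+tI$, governed by the threshold structure: $X+tI\in\Theta_+$ for all large $t$ and $X+tI\in\Theta_-$ for all small (very negative) $t$. Define $\olh(X):=\sup\{t:X+tI\in\Theta_-\}$ and $\underline h(X):=\inf\{t:X+tI\in\Theta_+\}$; ellipticity at $0$ gives $\olh(X)\le\underline h(X)$ always, with equality exactly when no ray-segment $(X+sI,\,s\in(\olh,\underline h))$ lies in $\{F=0\}$ — so $(ii)\Leftrightarrow(iii)$ is immediate once one checks that $\Gamma_0$ contains an open ball iff it contains such a ray-segment of positive length (the ``only if'' is trivial; the ``if'' uses that $\Gamma_0 = \Theta_+\cap\Theta_-$ is sandwiched between two sets closed under adding $S_\pm(n)$, so a segment in the interior of $\Gamma_0$ relative to the $I$-direction can be fattened). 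The equivalence $(iii)\Leftrightarrow(iv)$ is a reformulation: $(iv)$ says precisely that for $X_0\in\Gamma_0$ one has $\olh(X_0)=0=\underline h(X_0)$, and a short argument (translate any $X$ to a boundary point of $\Gamma_0$ along its ray, or note $\Gamma_0\neq S(n)$ since $F\not\equiv 0$ and use ellipticity) upgrades this from $X_0\in\Gamma_0$ to all $X$. For $(v)\Leftrightarrow(vi)$ I would lean on the discussion already in the Introduction: the paper asserts $\oT_+$ and $-\oT_-$ are proper positive elliptic maps under $\partial\Theta_-=\partial\Theta_+\neq\emptyset$, and duality $\widetilde{\oT_+}=-\oT_-$ unwinds, via \eqref{eq:dual}, to the statement that $S(n)\setminus\oT_-^{\circ}=-(S(n)\setminus\oT_+^{\circ})$, i.e. $\partial\Theta_+=\partial\Theta_-$ together with properness; the reverse implication is the same computation read backwards, noting $F\not\equiv 0$ forces properness of both sets. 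Finally $(vi)$ together with $F\not\equiv0$ forces $\partial\Theta_\pm\neq\emptyset$ (if $\partial\Theta_+=\emptyset$ then $\Theta_+$ is clopen, hence $\emptyset$ or $S(n)$, and likewise $\Theta_-$, contradicting $F\not\equiv 0$ via ellipticity at $0$), which is exactly the hypothesis of the main Theorem in the autonomous case; so $(vi)\Rightarrow(i)$ is an application of the Theorem, the condition \eqref{eq:cond} being automatic as noted in the text just above the Proposition.

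\textbf{The counterexample $(i)\Rightarrow(ii)$, equivalently $\neg(ii)\Rightarrow\neg(i)$.} Suppose $\Gamma_0\supseteq \overline{B}_r(X_*)$ for some $r>0$. Then on this ball $F\equiv 0$, so every $C^2$ function $w$ with $\cH w(x)\in B_r(X_*)$ for all $x$ is simultaneously a classical subsolution and supersolution; more to the point, I would exhibit two functions $v,u$ with $v|_{\partial\Omega}\le u|_{\partial\Omega}$ but $v>u$ somewhere. Pick a unit direction $\xi$ and a small $\eta>0$ with $X_*\pm\eta\,\xi\xi^T\in B_r(X_*)$ (possible since $\|\xi\xi^T\|=1$); on a ball $\Omega$, after an affine change of coordinates putting $\xi=e_1$, take $v(x)=\tfrac12 x^TX_* x+\tfrac{\eta}{2}\phi(x_1)$ and $u(x)=\tfrac12 x^TX_* x-\tfrac{\eta}{2}\phi(x_1)+c$ where $\phi$ is a bounded $C^2$ function with $|\phi''|\le 1$ chosen (e.g. a suitable bump or a shifted parabola-capped profile) so that $u-v = -\eta\phi(x_1)+c$ is $\ge 0$ on $\partial\Omega$ but $<0$ at the center — concretely $\phi$ convex near the center and the geometry of the ball make $\phi$ smaller on $\partial\Omega$ than in the middle after adding the constant $c$; both $\cH v=X_*+\tfrac{\eta}{2}\phi''(x_1)e_1e_1^T$ and $\cH u=X_*-\tfrac{\eta}{2}\phi''(x_1)e_1e_1^T$ stay in $B_r(X_*)\subseteq\Gamma_0$, so $F(\cH v)=F(\cH u)=0$ pointwise, making $v$ a (classical, hence viscosity) subsolution and $u$ a supersolution. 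This violates comparison, giving $\neg(i)$. The only care needed is in choosing $\phi$ so the boundary/interior inequality goes the right way on a genuine domain (a bounded open set); a clean choice is $\Omega=B_1(0)$ and $\phi(x_1)=x_1^2$ restricted near the origin and capped smoothly for $|x_1|>\tfrac12$ — then $v,u$ as above with the right sign of $c$ do the job — or, even simpler, work on a thin slab-like domain so that the dependence is effectively one-dimensional. This construction is essentially the standard ``flat spot'' obstruction to comparison and is the one non-formal step.

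\textbf{Anticipated main obstacle.} The routine bookkeeping is in the chain of set-theoretic equivalences $(ii)$–$(vi)$, and the only real care there is handling the extended-real-valued, possibly discontinuous $F$ — one must argue with the level sets $\Theta_\pm$ and not with $F$ itself, and keep track of closures versus interiors so that the duality statement $(v)$ matches $(vi)$ exactly; I expect this to be tedious but not deep, and it is already half-done in the Introduction. The substantive obstacle is making the counterexample in $(i)\Rightarrow(ii)$ airtight: one needs a bona fide bounded domain, two semicontinuous (in fact smooth) functions, the correct orientation of the boundary inequality, and the verification that the Hessians never leave the flat ball $B_r(X_*)\subseteq\Gamma_0$ so that both functions are honest sub/supersolutions. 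Once that is in place, the cycle closes: $(i)\Rightarrow(ii)\Rightarrow(iii)\Rightarrow(iv)$, then $(iv)$ (i.e. $\partial\Theta_+=\partial\Theta_-$, nonempty by $F\not\equiv 0$) is fed into the main Theorem to recover $(i)$, and $(v)$, $(vi)$ are attached to this loop via the elementary duality computations.
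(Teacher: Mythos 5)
Your skeleton coincides with the paper's (a flat-spot two-solution counterexample for (i)$\Rightarrow$(ii), fattening a segment of the ray $X+tI$ for (ii)$\Rightarrow$(iii), and closing the loop by feeding $\partial\Theta_+=\partial\Theta_-$ into the main Theorem, where \eqref{eq:cond} is automatic), but three concrete points do not survive scrutiny. First, your bookkeeping in (ii)$\Leftrightarrow$(iii) has the inequality backwards: by ellipticity at $0$ the set $\{t: X+tI\in\Theta_-\}$ is a down-set, $\{t:X+tI\in\Theta_+\}$ is an up-set, and their union is all of $\mR$ (every extended-real value of $F$ is either $\leq 0$ or $>0$), so one always has $\sup\{t:X+tI\in\Theta_-\}\geq\inf\{t:X+tI\in\Theta_+\}$, with the flat-spot alternative being \emph{strict} inequality, which then fattens to an open ball in $\Gamma_0$ exactly as you intend. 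As written, ``$\olh\leq\underline{h}$ always, with equality exactly when no ray-segment lies in $\{F=0\}$'' is internally inconsistent: with that inequality a strict gap would produce $t$ for which $F(X+tI)$ is neither $\geq 0$ nor $\leq 0$. This is a fixable slip, but the statement you call immediate is false as stated.

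Second, and this is the genuine gap: the duality statement (v) is not obtained by ``the same computation read backwards.'' The cheap direction is (v)$\Rightarrow$(vi) (the paper's two-line computation, which already needs $(\oT_\pm)^\circ=\Theta_\pm^\circ$), whereas getting \emph{to} (v) from (iv) or (vi) requires excluding points of $\oT_+\cap\Theta_-^\circ$, and this is precisely where the paper invokes Lemma \ref{lem:Thetaprop}: boundary points $Z\in\partial\Theta_+$ satisfy $F(Z-A)<0$ for $A>0$, points of $\oT_+$ move into $\Theta_+^\circ$ under addition of $A>0$, and the interior of the closure equals the interior. Your chain routes (iv)$\Rightarrow$(vi)$\Rightarrow$(v)$\Rightarrow$(i) but supplies neither (iv)$\Rightarrow$(vi) (your parenthetical ``(iv), i.e.\ $\partial\Theta_+=\partial\Theta_-$'' is an unproved step, true but needing the same kind of fattening argument) nor (vi)$\Rightarrow$(v); as it stands, (v) is not attached to the equivalence. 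Third, your claim that $F\not\equiv 0$ forces $\partial\Theta_\pm\neq\emptyset$ is false: $F\equiv 1$ is elliptic at $0$, not identically zero, and has $\Theta_+=S(n)$, $\Theta_-=\emptyset$, both boundaries empty. The paper instead disposes of this degenerate case by noting that such an operator admits only one type of semisolution, so comparison holds vacuously; without that observation your final step (vi)$\Rightarrow$(i) is incomplete. Incidentally, your (i)$\Rightarrow$(ii) construction works but can be streamlined to the paper's choice $\phi(x)=\frac12 x^TX_0x$ and $\psi=\phi+\epsilon(1-|x|^2)$ on the unit ball, which avoids all the capping and sign bookkeeping.
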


\begin{proof}
The necessity of part (ii) in order to get the comparison principle is quite apparent because we otherwise could construct two different polynomial solutions $\phi(x) = \frac{1}{2}x^T X_0 x$ and $\psi(x) = \phi(x) + \epsilon(1 - \lvert x\lvert^2)$ with equal boundary values at $\lvert x\rvert = 1$. Thus, (i) $\Rightarrow$ (ii).

Assume that $M:=\sup\{t\;|\; X+tI\in\Theta_-\}$ is not equal to $m:= \inf\{t\;|\; X+tI\in\Theta_+\}$ for some $X$. One can check that it is impossible to have $M<m$. We may therefore pick two real numbers $a$ and $b$ such that $m\leq a<b\leq M$.
But then $F(X+aI)\geq 0$ and $F(X+bI)\leq 0$ and the whole open set $\{Y\;|\;X + aI < Y < X+bI\}$ will be in $\Gamma_0$ since $F$ is elliptic at 0. That is, (ii) $\Rightarrow$ (iii).

Next, for $X_0\in\Gamma_0 = \Theta_-\cap\Theta_+$  we have $\sup\{t\;|\; X_0+tI\in\Theta_-\}\geq 0$ and $\inf\{t\;|\; X_0+tI\in\Theta_+\}\leq 0$. Thus, when $t>0$, (iii) implies $X_0 - tI\notin\Theta_+$ and $X_0 + tI\notin\Theta_-$. That is, $F(X_0-tI)<0<F(X_0+tI)$ and we have (iii) $\Rightarrow$ (iv).

Suppose now that $\oT_+$ and $-\oT_-$ are not duals. By Lemma \ref{lem:Thetaprop} (3),
\[\oT_+ \neq \Sn \setminus(\oT_-)^\circ = \Sn \setminus\Theta_-^\circ = \partial \Theta_-\cup (\Theta_+\setminus\Gamma_0).\]
From part (1) of the same Lemma we always have $\partial\Theta_-\subseteq\oT_+$, so it must be the right inclusion that does not hold. That is, there is a matrix $Z$ in $\oT_+$ but not in $\Sn \setminus\Theta_-^\circ$. In other words, $Z\in\oT_+\cap\Theta_-^\circ$. Since $\Theta_+^\circ$ is obviously open, part (2) of the Lemma yields $Z+\epsilon I\in\Theta^\circ_+\cap\Theta_-^\circ\subseteq\Gamma_0$ for all sufficiently small $\epsilon>0$. This contradicts (iv) and we have proved (iv) $\Rightarrow$ (v).

By (v) we get
\[\partial\Theta_+ = \oT_+\setminus\Theta_+^\circ = (\Sn \setminus\Theta_-^\circ)\setminus\Theta_+^\circ = \Sn \setminus(\Theta_-^\circ\cup\Theta_+^\circ),\]
which is symmetric in $+$ and $-$. Thus, (v) $\Rightarrow$ (vi).

Finally, (i) follows from (vi) by Theorem \ref{thm} when $\partial\Theta_\pm$ is nonempty. However, if $\partial\Theta_- = \partial\Theta_+ = \emptyset$, a non-trivial operator is either negative or positive in $\Sn $ and the comparison principle holds vacuously since the equation $F(\cH w) = 0$ will only have one type of solutions. The proof of the Proposition is therefore completed as (vi) $\Rightarrow$ (i).
\end{proof}

In the next result we present a simple condition on $F$ that will ensure the comparison principle also for equations $F(\cH w) = f(x)$ with a non-constant right-hand side. The condition is rather crude, but, on the other hand, easy to verify. We mention that an operator $F\colon \mathbf{F}\to\mR$, $\mathbf{F}\subseteq \Sn \times\mR^n\times\mR$, is called \emph{tame} in \cite{MR4286829} if there is a positive function $\ell$ such that $F(X+\tau I,p,s-r) - F(X,p,s) \geq \ell(r,\tau)$ for all $(X,p,s)\in \mathbf{F}$ and all $r,\tau>0$. This is obviously a condition very similar to the non-totally degenerate condition in \cite{MR2246004}. In \cite{MR4286829}, $F$ does not have to be defined on all of $\Sn \times\mR^n\times\mR$ and the ellipticity is given in terms of a \emph{monotonicity cone}. Our result below is a generalization of the uniqueness part of Theorem 2.7 \cite{MR4286829} in the case where $\mathbf{F}\subseteq \Sn $.


\begin{proposition}[Semiautonomous case]\label{prop:semiautonomous}
Let $f\in C(\Omega)$ and $F\colon \Sn \to\oR$ such that $F - f$ is elliptic at 0 with $\partial\Theta_+(x) = \partial\Theta_-(x)\neq\emptyset$.
The comparison principle holds for the equation
\[F(\cH w) = f(x)\qquad\text{in $\Omega$,}\]
if whenever $X_k\in\Sn$ is a sequence such that $F(X_k)\to f(x_0)$ then
\begin{equation}\label{eq:semiautcond}
\liminf_{k\to\infty}F\left( X_k + \tau I\right) > f(x_0)
	\end{equation}
for all $\tau>0$.
\end{proposition}

\begin{proof}
Suppose \eqref{eq:cond} is not true. i.e.,
\[\lim_{t\to 0^+}\sup_{\substack{x,y\in B_t(x_0)\\ Z\in\Theta_+^\delta(x)}}\dist(Z,\Theta_+(y)) > 0,\qquad \delta := \tfrac{|x-y|^2}{t},\]
for some $x_0\in\Omega$. Then there is a sequence $t_k\searrow 0$, points $x_k,y_k\in B_{t_k}(x_0)$, $X_k\in \Theta_+(x_k)\cap B_{1/\delta_k}$, and a positive $\tau$ such that
\[\dist(X_k,\Theta_+(y_k))\geq\dist(T_{\delta_k}(X_k),\Theta_+(y_k))\geq\tau\]
for all $k$. Since $F(X_k)\geq f(x_k)$, $F(T_{\delta_k}(X_k))<f(y_k)$, and $X_k\leq T_{\delta_k}(X_k)$, ellipticity at 0 yields $f(x_k)\leq F(X_k)<f(y_k)$ and thus $F(X_k)\to f(x_0)$.

By Corollary \ref{cor:oF} (1. and 3.), $X_k + \dist(X_k,\Theta_+(y_k))I\in\Gamma(y_k)$. Therefore,
\begin{align*}
&&f(y_k) &> F\left(X_k + \dist(X_k,\Theta_+(y_k))I - \frac{\tau}{2}I\right),\\
&\Rightarrow &f(y_k) &> F\left(X_k + \frac{\tau}{2}I\right),
\end{align*}
and taking the $\liminf$ then contradicts \eqref{eq:semiautcond}.
\end{proof}


\begin{example}
The \emph{special Lagrangian potential equation}
\[F(\cH w) := \sum_{i=1}^n \arctan(\lambda_i(\cH w)) = \theta\]
has attained much interest since it was introduced in \cite{MR666108}.
For a right-hand side constant $\theta \in (-n\pi/2,n\pi/2)$ the solutions have a nice geometrical interpretation. The graph of the gradient $\nabla w$ in $\Omega\times \mR^n$ is a \emph{special Lagrangian manifold}. i.e., it is a Lagrangian manifold of minimal area. 
See \cite{MR4179860} and the references therein.

The comparison principle is immediate by Proposition \ref{thm:2} and recently, \cite{MR4147574} were able to extend the result, using \eqref{eq:Hausdorff_cond}, to the equation
\begin{equation}\label{eq:SLPequation}
\sum_{i=1}^n \arctan(\lambda_i(\cH w)) = f(x)
\end{equation}
when $f\colon\Omega\to (-n\pi/2,n\pi/2)$ is continuous and avoids the \emph{special phase values}
\[\theta_j := (n-2j)\frac{\pi}{2},\qquad j = 1,\dots,n-1.\]
Note that Proposition \ref{prop:BM} is useless in this situation because $F$ is bounded and any property like $F(X+\tau I)-F(X)\geq \ell(\tau)$ is out of the question.

We bring up this equation because Proposition \ref{prop:semiautonomous} provides a very simple proof of the comparison result. Indeed, if \eqref{eq:semiautcond} is not true, then there are numbers $\theta\in f(\Omega)$, $\tau>0$, and a sequence $X_k\in \Sn$ with $F(X_k)\to \theta$, such that
\begin{align*}
0 &= \lim_{k\to\infty} F(X_k + \tau I) - F(X_k)\\
  &= \lim_{k\to\infty} \sum_{i=1}^n \arctan(\lambda_i(X_k) + \tau) - \arctan(\lambda_i(X_k))\geq 0,
\end{align*}
which -- since $\arctan$ is strictly increasing -- is possible only if each $\lambda_i(X_k)$ is unbounded as $k\to\infty$. There is thus a subsequence (still indexed by $k$) such that either $\lambda_i(X_k)\to +\infty$ or $\lambda_i(X_k)\to -\infty$. But this is a contradiction of the assumptions as
\[f(\Omega)\ni\theta = \lim_{k\to\infty} F(X_k)\\
= \lim_{k\to\infty} \sum_{i=1}^n  \arctan(\lambda_i(X_k))\\
= \sum_{i=1}^n \pm\frac{\pi}{2}
= \theta_j
\]
for some $j=0,\dots,n$.

The result is sharp in the sense that for each $j=1,\dots,n-1$ there exists a continuous $f$ with $\theta_j\in f(\Omega)\subseteq (-n\pi/2,n\pi/2)$ so that the comparison principle does not hold for the equation \eqref{eq:SLPequation}. See \cite{arxiv.2206.09373}.
\end{example}

We conclude this Section with an equation that to our knowledge cannot be covered by the existing theory.
It has the same form as the special Lagrangian potential equation, but it is, in a sense, somewhat less degenerate. Recall that
\[\arctan(\lambda) = \int_0^\lambda\frac{\dd s}{1 +s^2}.\]
Consider instead
\[a(\lambda) := \int_0^\lambda\frac{\dd s}{1 +|s|} = \sgn(\lambda)\ln(1+|\lambda|)\]
and the equation
\begin{equation}\label{eq:log_equation}
\sum_{i=1}^n a(\lambda_i(\cH w)) = f(x).
\end{equation}
The operator $F(X) := \sum_{i=1}^n a(\lambda_i(X))$ is unbounded, which generally is a good thing. However, Proposition \ref{prop:BM} is still useless and now also \eqref{eq:semiautcond} in Proposition \ref{prop:semiautonomous} will fail for every value $f(x_0)\in\mR$. This is because it is always possible to construct a sequence $X_k$ such that each $\lambda_i(X_k)$ goes to $\infty$ or $-\infty$ while still $F(X_k) \to f(x_0)$. Thus, $F(X_k+\tau I)-F(X_k)\to 0$ since $a'(\lambda)\to 0$ as $\lambda\to\pm\infty$. Of course, $F(X+\tau I)-F(X)>0$ for all $X\in\Sn$ and $\tau>0$, but this is not enough to construct strict sub- or supersolutions. At least not as they are defined in (5.2) in \cite{MR1031377}. 

The Hausdorff continuity condition \eqref{eq:Hausdorff_cond} works for the special Lagrangian equation because the level sets of the operator are asymptotically parallel in each interval $(\theta_j,\theta_{j-1})$. There is no such behaviour in the equation \eqref{eq:log_equation}. See Figure \ref{fig:a}.
\begin{figure}[h]
	\centering
	\begin{subfigure}[b]{0.45\textwidth}
		\centering
		\includegraphics{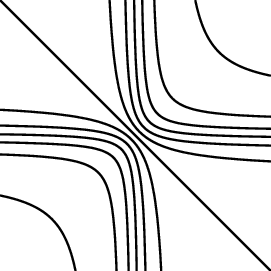}
		\caption{$\sum_{i=1}^2\arctan(\lambda_i) = const.$}
		\label{fig:arctan}
	\end{subfigure}
	\begin{subfigure}[b]{0.45\textwidth}
		\centering
		\includegraphics{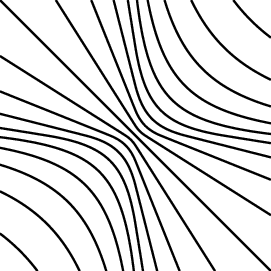}
		\caption{$\sum_{i=1}^2 a(\lambda_i) = const.$}
		\label{fig:alog}
	\end{subfigure}
	\caption{}\label{fig:a}
\end{figure}

\begin{proposition}\label{prop:alog_equation}
The comparison principle holds for the equation \eqref{eq:log_equation} in $\Omega$ whenever $f\colon\Omega\to\mR$ is locally Lipschitz.
\end{proposition}

\begin{proof}
The equation is elliptic with $\partial\Theta_+(x) = \partial\Theta_-(x)\neq\emptyset$ since $a$ is strictly increasing and onto $\mR$.

We take the standard approach.
Since $f(x)\leq F(X)\leq F(Z) < f(y)$, it follows that $|F(X)-f(x_0)|\leq Lt$
where $L\geq 0$ is the Lipschitz constant of $f$ restricted to $\oB_{t_0}(x_0)$. In particular,
\[-Lt+f(x_0) \leq F(X) = \sum_{i=1}^n a(\lambda_i(X))\leq n a(\lambda_n(X)),\]
which means that the largest eigenvalue of $X$ is bounded below. Say, $-M\leq \lambda_n(X)$.
Let $\tau>0$ be the solution to the equation $F(Z + \tau I) = f(y)$. Then $f(y) = F(Z + \tau I)\geq F(Z + \tau I) - F(X) + f(x)$ and
\[L|x-y|\geq f(y)-f(x)\geq F(Z + \tau I) - F(X).\]


We are now going to derive two different upper bounds for $\tau$. Consider first the case when also $\lambda_n(X)\leq M$. Then
\begin{align*}
2Lt &\geq F(Z + \tau I) - F(X)\\
	&\geq F(X + \tau I) - F(X)\\
    &\geq a(\lambda_n(X) + \tau) - a(\lambda_n(X))\\
    &= a'(\xi)\tau = \frac{\tau}{1+|\xi|}
\end{align*}
for some $\xi$ between $\lambda_n(X)$ and $\lambda_n(X)+\tau$.
In any case, $|\xi|\leq M + \tau$ and solving the above for $\tau$ then yields
\begin{equation}\label{eq:tau_bound_lambda_bounded}
\tau \leq 4(1+M)Lt
\end{equation}
when $t$ is so small so that $2Lt\leq 1/2$.

If $\lambda_n(X)> M\geq 0$, then
\begin{align*}
L|x-y| &\geq F(Z + \tau I) - F(X)\\
	  &\geq a(\lambda_n(Z) + \tau) - a(\lambda_n(X))\\
	  &= \ln(1 + \lambda_n(Z) + \tau) - \ln(1+\lambda_n(X))\\
	  &= \ln\left(\frac{1 + \lambda_n(Z) + \tau}{1+\lambda_n(X)}\right).
\end{align*}
That is,
\begin{align*}
\tau &\leq e^{L|x-y|}(1+\lambda_n(X)) - 1 - \lambda_n(Z)\\
       &= e^{L|x-y|} - 1 + e^{L|x-y|}\lambda_n(X) - \lambda_n(Z)\\
       &\leq e^{2Lt} - 1 + \frac{1}{\delta}\left(e^{L\frac{|x-y|}{2}} - 1\right)^2
\end{align*}
where the final inequality is due to Lemma \ref{lem:sm}. Together with \eqref{eq:tau_bound_lambda_bounded} this concludes the proof since $e^{2Lt}-1\to 0$ and since the last term is
\[\frac{t}{|x-y|^2}\left(L\frac{|x-y|}{2} + L^2\frac{|x-y|^2}{2!\cdot 2^2} + \cdots\right)^2\]
which also goes to zero as $t\to 0$.
\end{proof}

Similar calculations suggests that the result is true also when $f$ is locally Hölder continuous of order $\gamma\in (0,1)$ provided that $a$ is replaced with
\[a_\gamma(\lambda) := \int_0^\lambda\frac{\dd s}{1 + |s|^\gamma}.\]
It is not completely clear what the crucial properties of these equations are that makes the comparison principle hold. It is perhaps something like
\[F\left( T_\delta(X) + \tau I\right) - F(X) \geq \ell\left(\delta|X| + \frac{\tau}{1+|X|}\right)
\]
where $\ell$ is related to the modulus of continuity of $f$.


\section{The canonical operator $\oF$}\label{sec:oF}

Due to the idea introduced in \cite{MR1284912} -- which was further refined in \cite{MR2487853} and \cite{MR3677871} -- 
the study of viscosity sub- and supersolutions of elliptic equations
\[F(\cH u,x) = 0,\]
can be reduced to the study of the sub- and superlevel sets
\[\Theta_-(x) = \left\{X\in \Sn \;|\; F(X,x)\leq 0\right\},\quad \Theta_+(x) = \left\{X\in \Sn \;|\; F(X,x)\geq 0\right\}.\]
These are the only relevant objects. Krylov uses strict inequalities and assumes his elliptic sets to be open. \cite{MR2487853} and \cite{MR3677871} find it more natural to work with closed sets. When the comparison principle is the sole objective, it turns out that either assumption is superfluous.

When $\partial\Theta_-(x) = \partial\Theta_+(x)\neq\emptyset$ there is a canonical operator $\oF$ associated to the level sets that is \emph{consistent} with $F$. i.e., every (sub/super)solution of $F=0$ is also a (sub/super)solution of $\oF=0$. Namely, the signed distance function \eqref{eq:distop} from the common boundary $\Gamma(x) = \partial\Theta_\pm(x)$.
Its ellipticity was proved by Krylov (Theorem 3.2, \cite{MR1284912}). Various properties of similar constructions are derived in \cite{Harvey2020} where the equations also depend on $w$ and $\nabla w$.

\begin{proposition}\label{prop:oF}
Let $\Omega\subseteq\mR^n$ be open and assume that $\Theta_+$ is a proper positive elliptic map in $\Omega$. i.e.,
\[\emptyset\neq\Theta_+(x)\neq \Sn \qquad\text{and}\qquad\Theta_+(x) + \Sn_+  = \Theta_+(x)\qquad\forall x\in\Omega.\]
Define the function $\oF$ on $\Sn \times\Omega$ as
\begin{equation}
\oF(X,x) := -\inf\{t\in\mR\;|\; X+tI\in \Theta_+(x)\}.
\label{eq:acdo}
\end{equation}
Then $\oF$ is finite, elliptic, and 1-Lipschitz in $\Sn $, and has the nondegeneracy
\begin{equation}
\oF(X+\tau I,x) - \oF(X,x) = \tau
\label{eq:nondegen}
\end{equation}
for all $(X,x)\in \Sn \times\Omega$ and $\tau\in\mR$. 

Moreover, if $\Theta_+(x)$ is the superlevel set of an operator $F\colon \Sn \times\Omega\to\oR$, then every subsolution of $F(\cH w,x) = 0$ is also a subsolution to the equation $\oF(\cH w,x) = 0$ in $\Omega$. The opposite inclusion holds if $\Theta_+(x)$ is closed for all $x\in\Omega$.
\end{proposition}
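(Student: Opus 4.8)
The plan is to derive every claim from the single structural identity $\Theta_+(x)+S_+(n)=\Theta_+(x)$, together with properness (and, for the very last statement, closedness). Throughout, write $T(X,x):=\{t\in\mR : X+tI\in\Theta_+(x)\}$, so that $\oF(X,x)=-\inf T(X,x)$. I would first show that $T(X,x)$ is a nonempty interval, unbounded above and bounded below; this is exactly the finiteness claim. Pick $Y\in\Theta_+(x)$ (possible since $\Theta_+(x)\neq\emptyset$): whenever $t\geq\lambda_n(Y-X)$ we have $X+tI\geq Y$, hence $X+tI\in\Theta_+(x)+S_+(n)=\Theta_+(x)$, so $T(X,x)\neq\emptyset$ and $\oF(X,x)>-\infty$. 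Conversely pick $W\notin\Theta_+(x)$ (possible since $\Theta_+(x)\neq S(n)$): if some $t\leq\lambda_1(W-X)$ belonged to $T(X,x)$, then $W\geq X+tI$ would force $W\in\Theta_+(x)+S_+(n)=\Theta_+(x)$, a contradiction; so $T(X,x)$ is bounded below and $\oF(X,x)<\infty$. Finally, adding an element of $S_+(n)$ shows $t\in T(X,x)$ and $s\geq t$ imply $s\in T(X,x)$, so $T(X,x)$ is an interval unbounded above.

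Next come the three structural properties. The nondegeneracy \eqref{eq:nondegen} is just the translation $T(X+\tau I,x)=T(X,x)-\tau$ inside the infimum. Ellipticity: if $X\leq Y$, adding $Y-X\in S_+(n)$ gives $T(X,x)\subseteq T(Y,x)$, hence $\inf T(Y,x)\leq\inf T(X,x)$, i.e. $\oF(X,x)\leq\oF(Y,x)$. The $1$-Lipschitz bound in the operator norm then follows formally from these two facts: for arbitrary $X,Y$ one has $X\leq Y+|X-Y|I$ and $Y\leq X+|X-Y|I$, so ellipticity and \eqref{eq:nondegen} give $\oF(X,x)\leq\oF(Y,x)+|X-Y|$ and the symmetric inequality, whence $|\oF(X,x)-\oF(Y,x)|\leq|X-Y|$.

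For the two consistency statements I would argue via superlevel sets. One always has $\Theta_+(x)\subseteq\{X:\oF(X,x)\geq 0\}$, since $X\in\Theta_+(x)$ means $0\in T(X,x)$, forcing $\inf T(X,x)\leq 0$. Therefore, if $v$ is a viscosity subsolution of $F(\cH w,x)=0$ and $\phi\in C^2$ touches $v$ from above at a point $\hat x\in\Omega$, then by definition $F(\cH\phi(\hat x),\hat x)\geq 0$, i.e. $\cH\phi(\hat x)\in\Theta_+(\hat x)$, hence $\oF(\cH\phi(\hat x),\hat x)\geq 0$; thus $v$ is a subsolution of $\oF(\cH w,x)=0$. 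For the converse, assume each $\Theta_+(x)$ is closed. Then $T(X,x)$, being the preimage of a closed set under $t\mapsto X+tI$, is a closed interval unbounded above, so its infimum $-\oF(X,x)$ is attained: $X-\oF(X,x)I\in\Theta_+(x)$. If moreover $\oF(X,x)\geq 0$, then $X\geq X-\oF(X,x)I$ forces $X\in\Theta_+(x)$; hence $\{X:\oF(X,x)\geq 0\}=\Theta_+(x)$, the two superlevel sets coincide, and the two equations have the same subsolutions.

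The argument is essentially bookkeeping and I expect no genuine obstacle. The one place that really uses the full strength of the hypotheses is the finiteness of $\oF$, where both halves of properness are needed ($\Theta_+(x)\neq\emptyset$ for $\oF>-\infty$ and $\Theta_+(x)\neq S(n)$ for $\oF<\infty$), and the converse consistency, which genuinely needs closedness — without it the superlevel set of $\oF$ may be strictly larger than $\Theta_+(x)$. Openness of $\Omega$ plays no role in the algebra and enters only through the definition of viscosity solution.
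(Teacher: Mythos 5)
Your proof is correct and takes essentially the same route as the paper: both work directly with the interval $\{t\in\mR \mid X+tI\in\Theta_+(x)\}$, getting finiteness from the two halves of properness, nondegeneracy by translation, the $1$-Lipschitz bound from ellipticity plus nondegeneracy, and consistency from the inclusion of $\Theta_+(x)$ in the superlevel set of $\oF$, with closedness giving equality. The only (minor) difference is that you prove ellipticity and the closed case directly from $\Theta_+(x)+S_+(n)=\Theta_+(x)$ and attainment of the infimum, whereas the paper routes these steps through the boundary point $X-\oF(X,x)I$ and Lemma \ref{lem:Thetaprop}.
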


\begin{corollary}\label{cor:oF}
If $F\colon \Sn \times\Omega\to\oR$ is elliptic at 0 and $\Gamma(x) := \partial\Theta_-(x) = \partial\Theta_+(x) \neq\emptyset$ for all $x\in\Omega$, we have the following alternative representations of the canonical operator \eqref{eq:acdo}.
\begin{enumerate}
	\item For each $(X,x)\in \Sn \times\Omega$, $\oF(X,x)$ is the unique number such that
	\[X - \oF(X,x)I\in\Gamma(x).\]
	\item
	\[\oF(X,x) = -\sup\{t\in\mR\;|\; X+tI\in \Theta_-(x)\}.\]
	\item
	\begin{equation}
	\oF(X,x) =
\begin{cases}
-\dist(X,\Gamma(x)) & \text{if $X\in\Theta_-(x)$},\\
\dist(X,\Gamma(x)) & \text{if $X\in\Theta_+(x)$}.
\end{cases}
	\label{eq:distop}
	\end{equation}
	\item \[\oF(X,x) = \dist(X,\Theta_-(x)) - \dist(X,\Theta_+(x)).\]
\end{enumerate}
Moreover, a (sub/super)solution of $F(\cH w,x) = 0$ is also a (sub/super)solution to the equation $\oF(\cH w,x) = 0$ in $\Omega$. If $\Theta_+(x)$ and $\Theta_-(x)$ is closed for all $x\in\Omega$, the two equations are equivalent.
\end{corollary}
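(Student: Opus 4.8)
The strategy is to derive all four representations, together with the final consistency statement, from Proposition \ref{prop:oF}, which already supplies that $\oF$ is finite, elliptic, $1$-Lipschitz in $S(n)$, satisfies the nondegeneracy \eqref{eq:nondegen}, and relates subsolutions of $F=0$ to those of $\oF=0$ (with the converse inclusion when $\Theta_+(x)$ is closed). First note that ellipticity at $0$ forces $\Theta_\pm(x)$ to be $\pm S_\pm(n)$-invariant and that $\Gamma(x)\neq\emptyset$ forces $\Theta_\pm(x)$ to be proper, so the hypotheses of Proposition \ref{prop:oF} are in force with $\Theta_+=\Theta_+(x)$.

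Fix $(X,x)$ and put $b:=\inf\{t\in\mR : X+tI\in\Theta_+(x)\}$, so that $\oF(X,x)=-b$ by \eqref{eq:acdo}. By ellipticity at $0$ the set $\{t : X+tI\in\Theta_+(x)\}$ is an upward-closed half-line, and by the finiteness part of Proposition \ref{prop:oF} it is neither empty nor all of $\mR$; letting $t\downarrow b$ and $t\uparrow b$ and using $\partial A=\overline A\setminus A^\circ=\overline A\cap\overline{A^{c}}$ gives $X+bI\in\partial\Theta_+(x)=\Gamma(x)$. Since $\Theta_+(x)+S_+(n)=\Theta_+(x)$, the line $\{X+tI : t\in\mR\}$ meets $\partial\Theta_+(x)$ in at most one point — approximating a boundary point $X+sI$ from within $\Theta_+(x)$ would place every $X+s'I$ with $s'>s$ in $\Theta_+^\circ(x)$ — so $b$ is the unique number with $X+bI\in\Gamma(x)$; combined with \eqref{eq:nondegen} this is part (1). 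Running the same argument with $\Theta_-(x)$ in place of $\Theta_+(x)$ and $a:=\sup\{t : X+tI\in\Theta_-(x)\}$ yields $X+aI\in\partial\Theta_-(x)=\Gamma(x)$, and the uniqueness just established (the two boundaries coincide) forces $a=b$, which is part (2).

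For parts (3) and (4), observe that $X\in\Theta_+(x)$ gives $b\le 0$, hence $\oF(X,x)\ge 0$, while $X\in\Theta_-(x)$ gives $\oF(X,x)=-a\le 0$; on the overlap $\Theta_+(x)\cap\Theta_-(x)$ both inequalities hold, so $a=b=0$, $\oF(X,x)=0$, and $X\in\Gamma(x)$, which makes all the expressions agree there. By part (1), $X-\oF(X,x)I\in\Gamma(x)$, so $\dist(X,\Gamma(x))\le|\oF(X,x)I|=|\oF(X,x)|$; conversely $\oF(\cdot,x)$ is $1$-Lipschitz and vanishes on $\Gamma(x)$, so $|\oF(X,x)|\le|X-Y|$ for every $Y\in\Gamma(x)$, whence $|\oF(X,x)|=\dist(X,\Gamma(x))$, which together with the sign information is exactly \eqref{eq:distop}. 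For part (4), $\Gamma(x)=\partial\Theta_-(x)\subseteq\overline{\Theta_-(x)}$ gives $\dist(X,\Theta_-(x))\le\dist(X,\Gamma(x))$, while for $X\in\Theta_+(x)$ the $1$-Lipschitz bound $|X-Y|\ge\oF(X,x)-\oF(Y,x)\ge\oF(X,x)$ for $Y\in\overline{\Theta_-(x)}$ gives the reverse inequality; since $\dist(X,\Theta_+(x))=0$ here, this is part (4), and the case $X\in\Theta_-(x)$ is symmetric.

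The subsolution half of the final assertion is precisely the corresponding statement in Proposition \ref{prop:oF}. For the supersolution half one applies Proposition \ref{prop:oF} to the dual operator $G(X,x):=-F(-X,x)$, whose superlevel set is $-\Theta_-(x)$ (a proper positive elliptic map, with $\partial(-\Theta_-(x))=-\Gamma(x)$) and whose associated operator is $(X,x)\mapsto-\oF(-X,x)$ by part (2): a function $u$ is a supersolution of $F(\cH w,x)=0$ exactly when $-u$ is a subsolution of $G(\cH w,x)=0$, and likewise with $\oF$ in place of $F$, so the supersolution inclusion and its converse (under closedness of $\Theta_-(x)$) transfer directly. The only genuinely delicate point I anticipate is the identity $a=b$ and the verification that the piecewise formula \eqref{eq:distop} and the difference formula in (4) remain valid on the overlap $\Theta_+(x)\cap\Theta_-(x)$; once $\oF(X,x)=-b=-a$ and $|\oF(\cdot,x)|=\dist(\cdot,\Gamma(x))$ are in hand, the remaining manipulations are bookkeeping around Proposition \ref{prop:oF} and the $1$-Lipschitz property.
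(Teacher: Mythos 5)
Your proof is correct, and while it has the same overall architecture as the paper's -- everything is hung on Proposition \ref{prop:oF}, and the supersolution half of the final claim is obtained by passing to the dual operator $-F(-X,x)$, whose acdo is $-\oF(-X,x)$ -- the individual parts are handled by genuinely different means. For part 2 the paper simply cites the implication (vi) $\Rightarrow$ (iii) of Proposition \ref{thm:2}, which in that proposition is only available by going around the whole equivalence cycle (hence through the comparison principle itself); your direct argument -- the line $\{X+tI\}$ meets $\Gamma(x)$ in at most one point by Lemma \ref{lem:Thetaprop}(2), and both $X+bI$ with $b=\inf\{t: X+tI\in\Theta_+\}$ and $X+aI$ with $a=\sup\{t: X+tI\in\Theta_-\}$ lie on $\Gamma(x)$, forcing $a=b$ -- is self-contained and arguably cleaner. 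For parts 3 and 4 the paper argues with eigenvalue estimates (e.g.\ $W\geq X+\lambda_1(W-X)I\in\Theta_-$) together with part 2, whereas you exploit that $\oF(\cdot,x)$ is $1$-Lipschitz in the operator norm and vanishes on $\Gamma(x)$ (a consequence of the uniqueness in part 1), giving $|\oF(X,x)|=\dist(X,\Gamma(x))$ in one stroke and then part 4 from $\Gamma(x)\subseteq\overline{\Theta_\mp(x)}$; the two routes cost about the same and both are sound. One small remark on the closing sentence: what your duality argument (and equally the paper's one-line ``duality and the Proposition'') actually delivers is the distributive statement -- subsolution equivalence when $\Theta_+(x)$ is closed, supersolution equivalence when $\Theta_-(x)$ is closed -- which is the correct reading of ``the two equations are equivalent''; a one-sided closedness hypothesis does not give the other half, so your formulation is the accurate one.
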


Therefore, in order to check if a certain property (e.g. the comparison principle) holds for the sub- and supersolution of $F(\cH w,x) = 0$, it is sufficient to show that the property holds for the sub- and supersolution of $\oF(\cH w,x) = 0$. That is, for any $F\colon \Sn \times\Omega\to\oR$ elliptic at 0 with $\partial\Theta_+(x) = \partial\Theta_-(x)\neq\emptyset$, one can without loss of generality assume that $F$ is finite, elliptic, Lipschitz in $\Sn $, $F(X+\tau I,x)-F(X,x) = \tau$, and that the sub- and superlevel sets $\Theta_\mp(x)$ are closed. Furthermore, any existing good feature of $F$ -- like, for example, uniform ellipticity, regularity, convexity, positive homogenicity, or rotational invariance -- is often preserved, and sometimes enhanced, by $\oF$. This should be checked on a case to case basis.

\begin{example}
As evident from the proofs of Proposition \ref{prop:lin} and \ref{prop:lineig}, the canonical operators of $\tr(A(x)X)-f(x)$ and $\sum_{i=1}^n a_i(x)\lambda_i(X)-f(x)$ are $\frac{1}{\|A(x)\|_1}\left(\tr(A(x)X)-f(x)\right)$ and $\frac{1}{|\bfa(x)|_1}\left(\sum_{i=1}^n a_i(x)\lambda_i(X)-f(x)\right)$, respectively.
\end{example}

\begin{example}[The perturbed Monge-Ampère revisited]\label{ex:canonical_determinat}
Consider again
\[F(X,x) := \begin{cases}
\det(X+M(x)) - f(x),\qquad & \text{if $X+M(x)\geq 0$},\\
-\infty, &\text{otherwise,}
\end{cases}\]
with uniformly continuous $M\colon\Omega\to\Sn$ and $f\colon\Omega\to\mR$. When $n=2$, solving 
\[0 = F(X+\tau I,x) = \left(\lambda_1(X+M(x)) + \tau\right)\left(\lambda_2(X+M(x)) + \tau\right) - f(x)\]
for $\tau$ yields the canonical operator $\oF(X,x) = - \tau$. That is,
\[\oF(X,x) = \frac{1}{2}\tr(X+M(x)) - \frac{1}{2}\sqrt{\tr^2(X+M(x)) - 4(\det(X+M(x)) - f(x))}.\]
Alternatively,
\[\oF(X,x) = \frac{\lambda_1 + \lambda_2}{2} - \frac{1}{2}\sqrt{(\lambda_2-\lambda_1)^2 +4f(x)},\qquad \lambda_i := \lambda_i(X+M(x)).\]

Let $\Lambda := \lambda_2(X+M(x)) - \lambda_1(X+M(x)) \geq 0$ and $\Phi := 4f(x)$.
Since $M(y) \leq M(x) + \omega_M(|x-y|)I$,
we get
\[\lambda_2(X+M(y)) - \lambda_1(X+M(y)) \leq \Lambda + 2\omega_M(|x-y|).\]
By writing $\epsilon_M := 2\omega_M(|x-y|)$ and $\epsilon_f:=4\omega_f(|x-y|)$, it follows that
\begin{align*}
2\left|\oF(X,x) - \oF(X,y)\right|
	&\leq \epsilon_M + \sqrt{(\Lambda + \epsilon_M)^2 + \Phi + \epsilon_f} - \sqrt{\Lambda^2 + \Phi}\\
	&\leq \epsilon_M + \sqrt{\epsilon_f} + \sqrt{(\Lambda + \epsilon_M)^2 + \Phi} - \sqrt{\Lambda^2 + \Phi},
\end{align*}
which by standard analysis is bounded by $2\epsilon_M + \sqrt{\epsilon_f}$.
That is, $\left|\oF(X,x) - \oF(X,y)\right| \leq \omega_{\oF}(|x-y|)$ where $\omega_{\oF} := 2\omega_M + \sqrt{\omega_f}$. Thus, in contrast to $F$, the classical structure condition \eqref{eq:classical_structure_cond} holds for $\oF$ because the modulus of continuity $\omega_{\oF}$ is independent of $X$.
\end{example}

The properties of $\oF$ listed in the Proposition and Corollary are pointwise in $x$. We can therefore simplify, and prove the claims by considering autonomous equations. First, we settle some standard topological issues.

\begin{figure}[h]%
	\center
	\includegraphics{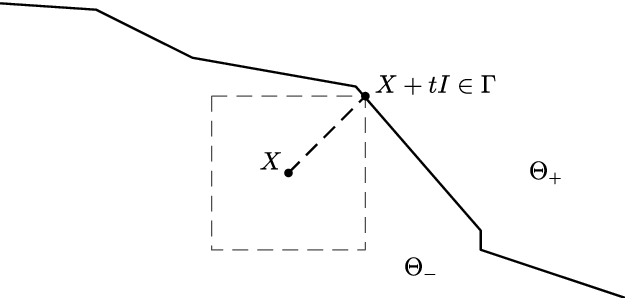}%
	\caption{A visualization of the equality of 2 and 3 in the Corollary. The ellipticity of the sublevel set $\Theta_-$ ensures that the largest ball $B_t(X)\subseteq\Theta_-$ (in the infinity norm) touches the boundary $\Gamma$ with its upper right corner.}%
	\label{fig:elliptic_set}%
\end{figure}

\begin{lemma}\label{lem:Thetaprop}
Let $F\colon \Sn \to\oR$ be elliptic at 0. Then the following hold.
\begin{enumerate}[(1)]
	\item $F(Z\mp A)\lessgtr 0$ for all $A>0$ and all $Z\in\partial\Theta_\pm$, respectively.
	\item If $Z\in\oT_\pm$, then $Z\pm A\in \Theta^\circ_\pm$ for all $A> 0$ and $Z\pm B\in \oT_\pm$ for all $B\geq 0$, respectively.
	\item The interior of the closure of the (sub/sup)level set equals its interior. That is,
	\[(\oT_\mp)^\circ = \Theta_\mp^\circ.\]
\end{enumerate}
\end{lemma}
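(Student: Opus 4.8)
The plan is to derive all three parts from one structural fact --- that ellipticity at $0$ is equivalent to $\Theta_+ + S_+(n) = \Theta_+$ and $\Theta_- + S_-(n) = \Theta_-$ --- together with the elementary observation that the cone of positive definite matrices is open. Since the operator $G(X) := -F(-X)$ is again elliptic at $0$, with $\Theta_+(G) = -\Theta_-(F)$ and $\Theta_-(G) = -\Theta_+(F)$, and since the homeomorphism $X \mapsto -X$ commutes with taking interior, closure, and boundary, each ``$-$'' statement below is the ``$+$'' statement for $G$ read through $X \mapsto -X$. So I would only prove the assertions about $\Theta_+$. The heart of the matter is part (2); parts (1) and (3) then follow in a few lines.

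For part (2), let $Z \in \oT_+$ and $A > 0$, and put $\epsilon_0 := \lambda_1(A) > 0$, so $A \geq \epsilon_0 I$. Choosing $W \in \Theta_+$ with $|W - Z| < \tfrac{\epsilon_0}{2}$ (possible since $Z \in \oT_+$) gives $Z \geq W - \tfrac{\epsilon_0}{2}I$, hence $Z + A \geq W + \tfrac{\epsilon_0}{2}I > W$. Positive ellipticity gives $\{Y : Y \geq W\} = W + S_+(n) \subseteq \Theta_+$, so the open set $\{Y : Y > W\}$ lies in $\Theta_+$ and contains $Z + A$; therefore $Z + A \in \Theta_+^\circ$. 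For the second half, if $B \geq 0$ then $B + \tfrac1k I > 0$ for each $k \in \mathbb{N}$, so $Z + B + \tfrac1k I \in \Theta_+^\circ \subseteq \Theta_+$ by what was just shown, and letting $k \to \infty$ gives $Z + B \in \oT_+$.

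Part (1): given $Z \in \partial\Theta_+$ and $A > 0$, suppose for contradiction that $F(Z - A) \geq 0$, i.e. $Z - A \in \Theta_+$. Then $(Z-A) + S_+(n) \subseteq \Theta_+$, and because $A$ is positive definite the open set $\{Y : Y > Z - A\}$ is a neighbourhood of $Z$ contained in $\Theta_+$, so $Z \in \Theta_+^\circ$ --- impossible for a boundary point. Hence $F(Z - A) < 0$. Part (3): the inclusion $\Theta_+ \subseteq \oT_+$ gives $\Theta_+^\circ \subseteq (\oT_+)^\circ$; conversely, if $Z \in (\oT_+)^\circ$ pick $\epsilon > 0$ with $B_\epsilon(Z) \subseteq \oT_+$, so that $Z - \tfrac{\epsilon}{2}I \in \oT_+$, and part (2) applied with $A = \tfrac{\epsilon}{2}I$ yields $Z = (Z - \tfrac{\epsilon}{2}I) + \tfrac{\epsilon}{2}I \in \Theta_+^\circ$; thus $(\oT_+)^\circ = \Theta_+^\circ$.

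I do not expect a real obstacle here: each step is immediate. The only thing to be careful about is the bookkeeping between the operator norm $|\cdot|$ and the matrix order --- specifically that $|M| < r$ means $-rI < M < rI$ --- which is exactly what makes a small norm-ball around $Z$ fit between $Z - rI$ and $Z + rI$ and lets the ``adding a multiple of $I$'' trick go through.
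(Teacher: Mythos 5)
Your proposal is correct and takes essentially the same approach as the paper: both rest on ellipticity at $0$ as invariance of $\Theta_\pm$ under adding $S_\pm(n)$, the interplay between the operator-norm ball and the matrix order (the ``add a multiple of $I$'' trick), and reduction of the ``$-$'' statements to the ``$+$'' ones by symmetry/duality. The only cosmetic differences are that you prove (1) by contradiction where the paper argues directly with a nearby point outside $\Theta_+$, and you deduce (3) directly from (2) while the paper derives it by contradiction via (1).
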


\begin{proof}
It suffices to prove the claims for $\Theta_+$. The proofs concerning $\Theta_-$ are symmetric.
(1): Let $Z\in\partial\Theta_+, A>0$, and write $t:=\lambda_1(A)>0$. By definition of a boundary, there is a point $X_-$ in $B_t(Z)$ not in $\Theta_+$. i.e., $F(X_-)<0$. Since
\[t>|Z-X_-| \geq \lambda_n(Z-X_-),\]
we get $Z-A\leq Z-tI<X_-$, and the result follows from ellipticity at 0.

(2): Let $Z\in\oT_+$ and let $t := \lambda_1(A)>0$. Pick $Z_t\in \Theta_+\cap B_{t/2}(Z)$ and let $Y\in B_{t/2}(Z+A)$. Then
\[Y\geq Z+A-\frac{t}{2}I \geq Z_t -\frac{t}{2}I +A-\frac{t}{2}I \geq Z_t,\]
which, by ellipticity at 0 shows that $B_{t/2}(Z+A)\subseteq \Theta_+$. i.e., $Z+A\in\Theta_+^\circ$.
The last claim is immediate from this: Let $0<A\to B\geq 0$.

(3): The inclusion $\Theta_+^\circ\subseteq (\oT_+)^\circ$ is trivial. Let $Z\in(\oT_+)^\circ$, but suppose that $Z\notin\Theta_+^\circ$. This means that $Z\in\partial\Theta_+$ and that there is a ball $B_r(Z)\subseteq\oT_+$. In particular, the lower left octant of the ball $B_r(Z)\cap\{Z-A\;|\;A>0\}$ is in $\oT_+$ and must therefore contain a point $Z-A_0\in\Theta_+$. But $F(Z-A_0)<0$ by (1), which is a contradiction.

%
\end{proof}

\begin{proof}[Proof of the Proposition]
Since $\Theta_+$ is a proper subset, there exist matrices $Y\notin\Theta_+$ and $Z\in\Theta_+$. Let $X\in \Sn $. Then
\[X + sI \leq Y\quad \forall s \leq \lambda_1(Y-X),\qquad\text{and}\qquad X + \lambda_n(Z-X)I \geq Z,\]
which, by ellipticity implies that the set $\{t\;|\; X+tI\in\Theta_+\}$ is nonempty and bounded below. It is an interval since $X+tI\in\Theta_+$ implies $X+\tau I \in\Theta_+$ for all $t\leq \tau$. Its largest lower bound $-\oF(X)$ is then finite and, surely, $X - \oF(X)I\in\partial\Theta_+$.

We next prove ellipticity of $\oF$. Let $X\leq Y$. Then $Y - \oF(X)I \geq X - \oF(X)I\in\partial\Theta_+$, which implies $Y-\oF(X)I\in\oT_+$ by Lemma \ref{lem:Thetaprop} (2). Thus,
\[-\oF(Y) = \inf\{t\;|\; Y+tI\in \Theta_+\} = \inf\{t\;|\; Y+tI\in \oT_+\} \leq -\oF(X).\]

The nondegeneracy \eqref{eq:nondegen} is immediate:
\begin{align*}
-\oF(X+\tau I) &= \inf\{t\;|\; X+(t+\tau)I\in \Theta_+\}\\
	&= \inf\{s-\tau\;|\; X+sI\in \Theta_+\}\\
	&= \inf\{s\;|\; X+sI\in \Theta_+\} - \tau\\
	&= -F(X) -\tau.
\end{align*}

Let $X,Y\in \Sn $. Since $\lambda_1(Y)I\leq Y \leq \lambda_n(Y)I$ we get by ellipticity and \eqref{eq:nondegen} that
\begin{align*}
\oF(X+Y) &\leq \oF(X+\lambda_n(Y)I) = \oF(X) + \lambda_n(Y),\quad\text{and}\\
\oF(X+Y) &\geq \oF(X+\lambda_1(Y)I) = \oF(X) + \lambda_1(Y).
\end{align*}
Put together,
\[|\oF(X+Y)-\oF(X)| \leq \max\{-\lambda_1(Y), \lambda_n(Y)\} = |Y|,\]
and $\oF$ is 1-Lipschitz in $\Sn $.

The consistency follows simply because $\Theta_+(x)\subseteq\oT_+(x)$ and since $\oT_+(x)$ is the superlevel set of $\oF$. So, naturally, if $\Theta_+(x)$ is closed for all $x\in\Omega$, then $F=0$ and $\oF=0$ share the same set of subsolutions.
\end{proof}

\begin{proof}[Proof of the Corollary]
Part 1 was established in the proof of the Proposition and part 2 is the implication (vi) $\Rightarrow$ (iii) in Proposition \ref{thm:2}.

For part 3, assume first that $X\in\Theta_+$. Then $\oF(X)\geq 0$ and
\begin{align*}
\dist(X,\Gamma)
	&= \inf_{W\in\Gamma}|X-W|\\
	&\leq \left| X - (X-\oF(X)I)\right|\\
	&= \oF(X),
\end{align*}
by part 1. On the other hand, if $W\in\Theta_-$, then $W\geq X+\lambda_1(W-X)I\in\Theta_-$, so
\begin{align*}
\oF(X)
	&= -\sup\{t\;|\; X+tI\in\Theta_-\}\\
	&\leq - \sup\{\lambda_1(W-X)\;|\; W\in\Theta_-\}\\
	&= \inf\{-\lambda_1(W-X)\;|\; W\in\Theta_-\}\\
	&\leq \inf_{W\in\Theta_-}|X-W|\\
	&= \dist(X,\Gamma).	
\end{align*}
Similar computations apply when $X\in\Theta_-$.

Part 4 is immediate from 3, and the consistency and equivalency follows from duality and the Proposition.
\end{proof}

We have now established the essential properties of $\oF\colon \Sn \times\Omega\to\mR$ in $\Sn $. It remains to consider the behaviour of $\oF$ with respect to  $x\in\Omega$, but our main regularity result has to wait until the next Section.
Below we identify the property of an elliptic map that give rise to continuous canonical operators.

\begin{proposition}\label{prop:canonocal_cont_equiv}
Let $F\colon \Sn \times\Omega\to\oR$ be elliptic at 0 with $\partial\Theta_+(x) = \partial\Theta_-(x) \neq \emptyset$, and let $\oF$ denote the canonical operator \eqref{eq:acdo}. Then
$\oF$ is continuous in $\Sn \times\Omega$ if and only if
given $x_0\in\Omega$, $X_0\in \Gamma(x_0)$, and $\epsilon>0$ there is a $t>0$ such that
\begin{equation}\label{eq:contproperty}
X_0 \pm \epsilon I \in \Theta_\pm(x)\qquad\text{whenever}\qquad |x-x_0|\leq t.
\end{equation}
\end{proposition}

\begin{proof}
By the equi-continuity in $X$ it suffices to show that $x\mapsto \oF(X,x)$ is continuous for each fixed $X\in \Sn $. 
Let $x_0\in\Omega$ and $\epsilon>0$. By Corollary \ref{cor:oF}, $X_0 := X - \oF(X,x_0)I\in \Gamma(x_0)$. Choose $t>0$ so that $X_0\pm\epsilon I\in\Theta_\pm(x)$ whenever $|x-x_0|\leq t$. Then for such an $x$ also
\begin{align*}
\oF(X,x_0) - \oF(X,x)
&= -\oF\left(X - \oF(X,x_0)I,x \right)\\
&= \begin{cases}
\inf\left\{\tau\;|\; X_0 + \tau I \in\Theta_+(x) \right\} \leq \epsilon\\
\sup\left\{\tau\;|\; X_0 + \tau I \in\Theta_-(x) \right\} \geq -\epsilon.
\end{cases}
\end{align*}

For the other direction, assume that $\oF$ is continuous. Let $x_0\in\Omega$, $X_0\in \Gamma(x_0)$, and $\epsilon>0$. Then we can choose $t>0$ such that $\oF(X_0,x) = \oF(X_0,x) - \oF(X_0,x_0) < \epsilon$ whenever $|x-x_0|\leq t$. That is,
\[0 > \oF(X_0,x) - \epsilon = \oF(X_0 - \epsilon I,x)\]
and $X_0 - \epsilon I \in \Theta_-(x)$. Similarly, $\oF(X_0,x) > -\epsilon$ and $X_0 + \epsilon I \in \Theta_+(x)$.
\end{proof}

Note that the $+$ and $-$ in \eqref{eq:contproperty} correspond to lower- and upper semicontinuity of $x\mapsto\oF(X,x)$, respectively.

\section{The perturbation $T_\delta$}\label{sec:Tdelta}

For $\delta\in\mR$, let $\D_\delta = \D_\delta^n$ be the open and unbounded subset
\[\D_\delta := \{X\in \Sn \;|\; \delta X<I\}\]
of the space of symmetric $n\times n$ matrices.
Our goal is here to systematize the properties of the function
\[T_\delta\colon \D_\delta\to \D_{-\delta},\qquad T_\delta(X) := X(I-\delta X)^{-1},\]
which is an essential part of the uniqueness machinery of viscosity solutions.
It appears in the theory because, put simply, the \emph{sup-convolution}
\[\phi^\epsilon(x) := \sup_y\left\{\phi(y) - \frac{|x-y|^2}{2\epsilon}\right\}\]
of a quadratic function $\phi(x) = \frac{1}{2}x^T B x$ is $\phi^\epsilon(x) = \frac{1}{2}x^T T_\epsilon(B) x$ for small $\epsilon>0$. See \cite{MR1462699} exercise 11.2.

Note that $\D_0 = \Sn $ and that $T_0$ is the identity.
The group structure
\[T_{\alpha+\beta} = T_\alpha\circ T_\beta\]
(whenever defined) is easily proved and $T_\delta$ is thus a bijection with inverse $T_\delta^{-1} = T_{-\delta}$. This mapping of matrices is the \emph{lifting} of the strictly increasing \emph{scalar} function
\[T_\delta\colon \D_\delta\to \D_{-\delta},\qquad T_\delta(\lambda) := \frac{\lambda}{1-\delta \lambda}\]
where now $\D_\delta = \D_\delta^1 = \{\lambda\in\mR\;|\; \delta \lambda<1\}$. That is,
\[T_\delta(X) = \sum_{i=1}^n T_\delta(\lambda_i(X))\xi_i\xi_i^T\]
where $\xi_i$ is a choice of corresponding unit length eigenvectors of $X$. Since $\lambda\mapsto T_\delta(\lambda)$ is increasing, the order of the eigenvalues is preserved: $\lambda_i\circ T_\delta = T_\delta\circ\lambda_i$.
More generally, one can show that if $X,Y\in\D_\delta$ with $XY=0$, then $X+Y\in\D_\delta$ and $T_\delta(X+Y) = T_\delta(X) + T_\delta(Y)$. And if $Q$ is a $m\times n$ matrix such that $Q^TQ = I$, then $QXQ^T\in\D_\delta^m$ and
\[T_\delta\left(QXQ^T\right) = QT_\delta(X)Q^T.\]
In particular, $T_\delta(\lambda P) = T_\delta(\lambda)P$ for projections $P$ and scalars $\lambda\in\D_\delta^1$.

Furthermore, $B_{1/|\delta|} = \D_{-\delta}\cap\D_{\delta}\subseteq \D_{\pm\delta}$ (with the interpretation $B_{1/0}= \Sn $) and the perturbed level sets \eqref{eq:Tddef} can be written as 
\[\Theta_\mp^\delta(x) = T_{\mp\delta}\left(\Theta_\mp(x)\cap B_{1/\delta}\right),\qquad \delta\geq 0.\]
For $\delta\geq 0$, $0\leq c \leq 1$, and $X\in B_{c/\delta}$, we have
$1 + c> 1 - \delta\lambda_i(X) > 1 - c$,
which by the identity $T_\delta(X) = X + \delta X^2(I-\delta X)^{-1}$ yield the first order approximations
\begin{equation}\label{eq:T_firstorder_approx}
X + \frac{\delta}{1+c}X^2 \leq T_\delta(X) \leq X + \frac{\delta}{1-c}X^2.
\end{equation}
Of course, the upper bound is valid only for $c<1$.

As a side note, we mention that $T_\delta$ appears in the seemingly unrelated situation of Lemma 3.2 in \cite{MR4395603} as well: If $v\in C^2(\Omega)$ with $|\nabla v|\equiv 1$, then
\[\cH v\left(x - \delta\nabla v^T(x)\right) = T_\delta\left(\cH v(x)\right)\]
for all sufficiently small $|\delta|$.

\begin{proposition}[Monotonicity]\label{prop:Tmonotonicity}
Let $\delta\in\mR$ and $X,Y\in\D_\delta$. Then
\begin{enumerate}
\item 
\[X\leq Y\qquad\Rightarrow\qquad T_\delta(X) \leq T_\delta(Y).\]
\item\label{prop:T2} \[\frac{T_\alpha(X) - T_\beta(X)}{\alpha-\beta} = T_\alpha(X)T_\beta(X) \geq 0\] for all $\alpha\neq\beta$ between 0 and $\delta$. In particular,
\[\alpha\leq\beta\qquad\Rightarrow\qquad T_\alpha(X)\leq T_\beta(X).\]
\end{enumerate}
\end{proposition}

\begin{proof}
1: The assertion is that the scalar function $T_\delta$ is \emph{operator monotone} in $\D_\delta^1$. Assume first that $\delta > 0$ and let $X,Y\in\D_\delta$ with $X\leq Y$. Then $0 < I - \delta Y \leq I - \delta X$ and $(I-\delta X)^{-1} \leq (I-\delta Y)^{-1}$ (Proposition V.1.6 \cite{MR1477662}). By the identity $\delta X(I-\delta X)^{-1} = (I-\delta X)^{-1} - I$ it follows that
\[T_\delta(X) = \frac{1}{\delta}(I-\delta X)^{-1} - \frac{1}{\delta}I \leq \frac{1}{\delta}(I-\delta Y)^{-1} - \frac{1}{\delta}I = T_\delta(Y).\]
This is still true when $\delta<0$ because the inequalities change directions twice.

2: Since everything commutes, the factorization
\begin{align*}
T_\alpha(X) - T_\beta(X)
	&= X(I-\alpha X)^{-1} - X(I-\beta X)^{-1}\\
	&= (I-\alpha X)^{-1}(I-\beta X)^{-1}\Big[ X(I-\beta X) - X(I-\alpha X)\Big]\\
	&= (\alpha - \beta)(I-\alpha X)^{-1}(I-\beta X)^{-1} X^2\\
	&= (\alpha - \beta)T_\alpha(X)T_\beta(X)
\end{align*}
is valid. Also, a product of (semi)positive commuting matrices is (semi)positive.
\end{proof}

\begin{proposition}[Set equalities]\label{prop:Tset_equalities}
Let $\delta\in\mR$ and $m\geq 1$. Then 
\begin{enumerate}
\item\label{prop:Tm} $T_\delta(\D_{m\delta}) = \D_{-\delta}\cap\D_{(m-1)\delta}$. 
\item 
$T_\delta\left(B_{1/(m|\delta|)}\right) = \D_{-(m+1)\delta}\cap \D_{(m-1)\delta}$.
\end{enumerate}
\end{proposition}

From \ref{prop:Tm} we get in particular $T_\delta(\D_{\delta}) = \D_{-\delta}$ and $T_\delta(\D_{2\delta}) = B_{1/|\delta|}$.

\begin{figure}[h]
	\centering
	\begin{subfigure}[b]{0.45\textwidth}
		\centering
		\includegraphics{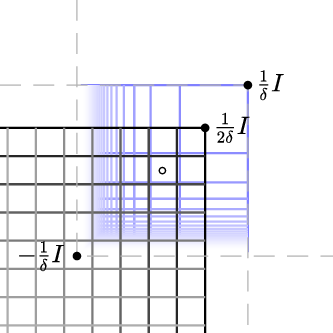}
		\caption{}
	\end{subfigure}
	\begin{subfigure}[b]{0.45\textwidth}
		\centering
		\includegraphics{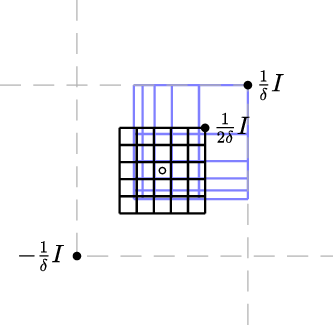}
		\caption{}
	\end{subfigure}
	\caption{The \color{blue} images \normalcolor in $T_\delta(\D_{2\delta}) = B_{1/\delta}$ and $T_\delta(B_{1/(2\delta)}) = \D_{-3\delta}\cap \D_{\delta}$ of the uniform grid covering the domains $\D_{2\delta}$ and $B_{1/(2\delta)}$, respectively. Here, $\delta>0$.}
\end{figure}

\begin{proof}
1: Let $X\in\D_{m\delta}$. Firstly, $m\delta X<I$, so $\delta X<I/m\leq I$ and $X\in\D_{\delta}$. Next, $I + \delta T_\delta(X) = (I-\delta X)^{-1}>0$, so $-\delta T_\delta(X) < I$ and $T_\delta(X)\in\D_{-\delta}$. Similarly,
$I - \delta(m-1)T_\delta(X) = (I-\delta X)^{-1}(I-\delta m X)>0$, which shows that $T_\delta(X)\in\D_{(m-1)\delta}$.
For the opposite inclusion $Y\in \D_{-\delta}\cap\D_{(m-1)\delta}$, one computes in the same way that $X := T_{-\delta}(Y)$ is in $\D_{m\delta}$.

2: For $X\in B_{1/(m|\delta|)}$ we have $I \pm(m\pm 1)\delta T_\delta(X) = (I-\delta X)^{-1}(I \pm m\delta X) > 0$, and thus $T_\delta(X)\in \D_{-(m+1)\delta}\cap\D_{(m-1)\delta}$. For $Y\in \D_{-(m+1)\delta}\cap\D_{(m-1)\delta}\subseteq \D_{-\delta}$ we get in the same way as above that $X := T_{-\delta}(Y) \in B_{1/(m|\delta|)}$.
\end{proof}

In analogy to the dual \eqref{eq:dual} of an elliptic set, the \emph{dual} of an operator $F\colon \Sn \times\Omega\to\oR$ is $\tF\colon \Sn \times\Omega\to\oR$ given by
\begin{equation}\label{eq:dualeq}
\widetilde{F}(X,x):= - F(-X,x).
\end{equation}
Likewise, we say that a \emph{condition} on $F$ is dual if it also holds for \eqref{eq:dualeq}. In connection with the comparison principle, it should be unsatisfying if a proposed sufficient condition is not dual. This is because the sub/supersolutions of $F=0$ are exactly the negative of the super/subsolution of the dual equation, and the condition is then certainly not necessary. More importantly, in our particular case it turns out that we need duality in order to get continuity of the canonical operator. As a step towards proving \eqref{eq:cond} is dual, we show that there is actually not necessary to take the supremum over all $Z\in \Theta_+^\delta(x) = T_\delta\left(\Theta_+(x)\cap B_{1/\delta}\right)$.

\begin{proposition}\label{prop:c_invariance}
	Let $F\colon \Sn \times\Omega\to\oR$ be elliptic at 0 with $\partial\Theta_+(x) = \partial\Theta_-(x)\neq\emptyset$ at all $x$ in $\Omega$. Then, for each $x_0\in\Omega$, the limit
	\begin{equation*}
	\lim_{t\to 0^+}\sup_{\substack{x,y\in B_t(x_0)\\ Z\in T_\delta\left(\Theta_+(x)\cap B_{c/\delta}\right)}}\dist\left(Z,\Theta_+(y)\right),\qquad \delta := \tfrac{|x-y|^2}{t},
	\end{equation*}
	exists and is independent of $0<c\leq 1$.
\end{proposition}

\begin{proof}
	Write the limit as $l(c)$.
	Since $B_{c/\delta}\subseteq B_{1/\delta}$ in $\Sn $, we have $l(c)\leq l(1)$. But
	since $Z\mapsto - \dist(Z,\Theta_+)$ is elliptic, $\delta\to T_\delta(X)$ is increasing, and $B_{cs}(x_0)\subseteq B_s(x_0)$ in $\Omega$ we also have
	\begin{align*}
	l(1) &= \lim_{t\to 0^+}\sup_{\substack{x,y\in B_t(x_0)\\ X\in\Theta_+(x)\cap B_{\frac{t}{|x-y|^2}}}}\dist\left(T_\frac{|x-y|^2}{t}(X),\Theta_+(y)\right)\\
	&\leq \lim_{t\to 0^+}\sup_{\substack{x,y\in B_t(x_0)\\ X\in\Theta_+(x)\cap B_{\frac{t}{|x-y|^2}}}}\dist\left(T_{c\frac{|x-y|^2}{t}}(X),\Theta_+(y)\right)\\
	&= \lim_{s\to 0^+}\sup_{\substack{x,y\in B_{cs}(x_0)\\ X\in\Theta_+(x)\cap B_{\frac{cs}{|x-y|^2}}}}\dist\left(T_{\frac{|x-y|^2}{s}}(X),\Theta_+(y)\right),\quad (s := t/c),\\
	&\leq \lim_{s\to 0^+}\sup_{\substack{x,y\in B_{s}(x_0)\\ X\in\Theta_+(x)\cap B_{\frac{cs}{|x-y|^2}}}}\dist\left(T_{\frac{|x-y|^2}{s}}(X),\Theta_+(y)\right)
	= l(c).
	\end{align*}
The limit exists and is non-negative because $\Theta_+(x)\cap B_{c/\delta}$ is nonempty for sufficiently small $\delta$, and with the same technique as above one can show that the expression inside the limit is an increasing function of $t$.
\end{proof}
Both ellipticity at 0 and the non-degeneracy $\partial\Theta_+ = \partial\Theta_-\neq\emptyset$ are obviously dual conditions. Since the suplevel set $\widetilde{\Theta}_+(x)$ of $\tF$ is $-\Theta_-(x)$ and $T_\delta(-Y) = -T_{-\delta}(Y)$ one can easily show that \eqref{eq:cond} for $\tF$ is \eqref{eq:newconddual} below.

\begin{proposition}[Duality of hypothesis]\label{prop:duality_of_hyp}
	Let $F\colon \Sn \times\Omega\to\oR$ be elliptic at 0 with $\partial\Theta_+(x_0) = \partial\Theta_-(x_0)\neq\emptyset$ at $x_0\in\Omega$. Then
	\begin{align}
	\lim_{t\to 0^+}&\sup_{\substack{x,y\in B_t(x_0)\\ X\in\Theta_+(x)\cap B_{1/\delta}}}\dist\left(T_\delta(X),\Theta_+(y)\right) = 0\label{eq:newcond}\\
	&\text{if and only if}\notag\\
	\lim_{t\to 0^+}&\sup_{\substack{x,y\in B_t(x_0)\\ Y\in\Theta_-(y)\cap B_{1/\delta}}}\dist\left(T_{-\delta}(Y),\Theta_-(x)\right) = 0.\label{eq:newconddual}
	\end{align}
\end{proposition}

\begin{proof}
It suffices to show one direction. Suppose \eqref{eq:newconddual} is not true. By Proposition \ref{prop:c_invariance} (and by duality of its assumptions) we can replace $B_{1/\delta}$ with $B_{1/(3\delta)}$. Then there is a sequence $t_k\searrow 0$, points $x_k,y_k\in B_{t_k}(x_0)$, and $Y_k \in\Theta_-(y_k)\cap B_{1/(3\delta_k)}$ such that, for all $k$,
	\[\dist\left(T_{-\delta_k}(Y_k),\Theta_-(x_k)\right) > \tau\]
	for some positive $\tau$. Define $Z_k := T_{-\delta_k}(Y_k)$ and $X_k := Z_k - \tau I$. Then
	\[X_k > Z_k - \dist\left(Z_k,\Theta_-(x_k)\right)I \in\Gamma(x_k),\]
	which means that $X_k \in\Theta_+(x_k)$. Moreover,
	\[Z_k \in  \D_{-(m+1)(-\delta_k)}\cap \D_{(m-1)(-\delta_k)} = \D_{4\delta_k}\cap \D_{-2\delta_k}\]
	by Proposition \ref{prop:Tset_equalities}. Consider $k$ so large so that $\delta_k\tau < 1/4$ and write $\epsilon_k := \frac{\delta_k}{1-\delta_k\tau} > \delta_k$. Now,
	\[\delta_k X_k = \delta_k (Z_k - \tau I) < \left(\frac{1}{4} - \delta_k\tau\right)I < \left(1 - \delta_k\tau\right)I,\]
	which shows that $X_k\in\D_{\epsilon_k}\subseteq \D_{\delta_k}$. Note that $1+\epsilon_k\tau = \frac{1}{1-\delta_k\tau}$ and $\frac{\epsilon_k}{1+\epsilon_k\tau} = \delta_k$. Thus,
	\begin{align*}
	T_{\epsilon_k}(X_k)
	&= T_{\epsilon_k}\left(Z_k - \tau I\right)\\
	&= (Z_k - \tau I)\left( I - \epsilon_k(Z_k - \tau I)\right)^{-1}\\
	&= (Z_k - \tau I)\left( (1+\epsilon_k\tau)\left(I - \frac{\epsilon_k}{1+\epsilon_k\tau}Z_k\right)\right)^{-1}\\
	&= (1-\delta_k\tau)(Z_k - \tau I)\left( I - \delta_k Z_k\right)^{-1}\\
	&= T_{\delta_k}(Z_k) - \tau\Big[\delta_k T_{\delta_k}(Z_k) + (1-\delta_k\tau)\left( I - \delta_k Z_k\right)^{-1}\Big].
	\end{align*}
	Next, since $0 < I-\delta_k Z_k < (1+\frac{1}{2})I = \frac{3}{2}I$, we have $(I-\delta_k Z_k)^{-1}>\frac{2}{3}I$ and
	\begin{align*}
	T_{\delta_k}(X_k)
	&\leq T_{\epsilon_k}(X_k)\\
	&= Y_k - \tau\Big[\delta_k Y_k + (1-\delta_k\tau)\left( I - \delta_k Z_k\right)^{-1}\Big]\\
	&\leq Y_k - \tau\Big[-\frac{1}{3} + (1-\delta_k\tau)\frac{2}{3}\Big]I\\
	&\leq Y_k - \frac{\tau}{6}I.
	\end{align*}
	Finally, $X_k\in B_{1/\delta_k}$ as $X\in\D_{\delta_k}$ and $X_k > - (\frac{1}{2\delta_k} + \tau)I > - \frac{1}{\delta_k}I$. Thus,
\[\dist\left(T_{\delta_k}(X_k),\Theta_+(y_k)\right)
\geq \dist\left(Y_k - \frac{\tau}{6}I,\Theta_+(y_k)\right)\\
\geq \frac{\tau}{6}\]
is a contradiction of \eqref{eq:newcond}.
\end{proof}

\begin{proposition}\label{prop:contT}
Under the hypothesis of Theorem \ref{thm}, the canonical operator
is continuous in $\Sn \times\Omega$.
\end{proposition}

\begin{proof}
We shall show lower semicontinuity of $x\mapsto\oF(X,x)$ under the condition \eqref{eq:newcond}. Then, $\overline{\tF}(X,x)$ is l.s.c. as well by Proposition \ref{prop:duality_of_hyp}. A straight forward calculation yields $\overline{\tF} = \widetilde{\oF}$,
which means that $x\mapsto \oF(-X,x)=-\widetilde{\oF}(X,x)$ is also upper semicontinuous, and thus continuous for all $X\in \Sn $.

	Let $x_0\in\Omega$, $X_0\in\Gamma(x_0)$ and $\epsilon>0$ be given. Choose a small $t>0$ so that $t|X_0|<\frac{1}{4}$, $\frac{4}{3}t|X_0|^2<\frac{\epsilon}{2}$, and
	\[\sup_{\substack{x,y\in B_t(x_0)\\ Z\in\Theta_+^{\delta}(y)}} - \oF(Z,x) \leq\sup_{\substack{x,y\in B_t(x_0)\\ Z\in\Theta_+^{\delta}(y)}} \dist\left( Z,\Theta_+(x)\right) < \epsilon/2.\]
Now,
	\begin{align*}
	-\frac{\epsilon}{2}
	&< \inf_{\substack{x,y\in B_t(x_0)\\ X\in\Theta_+(y)\cap B_\frac{t}{|x-y|^2}}} \oF\left( T_\frac{|x-y|^2}{t}(X),x\right)\\
	&\leq \inf_{\substack{x\in B_t(x_0)\\ X\in\Theta_+(x_0)\cap B_\frac{t}{|x-x_0|^2}}} \oF\left( T_\frac{|x-x_0|^2}{t}(X),x\right)\\
	&\leq \inf_{\substack{x\in B_t(x_0)\\ X\in\Theta_+(x_0)\cap B_{1/t}}} \oF\left( T_t(X),x\right)
	\end{align*}
	since $\frac{|x-x_0|^2}{t} < t$ and thus $B_\frac{t}{|x-x_0|^2}\supseteq B_{1/t}$ and $T_\frac{|x-x_0|^2}{t}(X)\leq T_t(X)$. Surely, $X_0\in \oT_+(x_0)\cap B_{1/t}$ and
	\[T_t(X_0) \leq X_0 + \frac{t}{1-c}X_0^2 = X_0 + \frac{4t}{3}X_0^2 \leq X_0 + \frac{\epsilon}{2}I\]
	by \eqref{eq:T_firstorder_approx}, ($c=1/4$). That is,
	\[0 < \frac{\epsilon}{2} + \oF\left( X_0 + \frac{\epsilon}{2}I,x\right) = \oF\left( X_0 + \epsilon I,x\right)\]
	and $X_0 + \epsilon I\in \Theta_+(x)$ for all $x\in B_t(x_0)$. We conclude by Proposition \ref{prop:canonocal_cont_equiv}.
\end{proof}

The following connection between $T_\delta$ and the matrix inequality (3.10) in \cite{MR1118699} is probably well known, but we include the proof since we have not been able to find a suitable reference for the exact result below.

\begin{proposition}\label{prop:matrixineq}
	Let $X,Y\in \Sn $ and let $\alpha>0$. Then
	\begin{equation}
	\begin{bmatrix}
	X & 0\\ 0 & -Y
	\end{bmatrix} \leq \alpha\begin{bmatrix}
	I & -I\\ -I & I
	\end{bmatrix}
	\label{eq:struc}
	\end{equation}
	\[\text{if and only if}\]
	\[\epsilon X < I\qquad\text{and}\qquad X(I-\epsilon X)^{-1}\leq Y\quad\forall\epsilon\in[0,1/\alpha).\]
\end{proposition}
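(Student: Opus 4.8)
The plan is to reduce the block-matrix inequality \eqref{eq:struc} to a scalar computation on the Schur complement, and then recognize the resulting expression. First I would note that the block matrix on the right, $\alpha\bigl[\begin{smallmatrix} I & -I\\ -I & I\end{smallmatrix}\bigr]$, is positive semidefinite but degenerate (its kernel is the diagonal $\{(\xi,\xi)\}$), so a naive Schur-complement argument must be handled with care. The cleanest route is to fix $\epsilon\in[0,1/\alpha)$ and test \eqref{eq:struc} against vectors of the special form $(u, u - \epsilon X u)$, or equivalently to conjugate the $2n\times 2n$ inequality by the invertible block-triangular matrix $\bigl[\begin{smallmatrix} I & 0\\ -\epsilon X & I\end{smallmatrix}\bigr]$ (and its transpose). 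This congruence preserves the ordering in $S(2n)$, and a direct computation turns the right-hand side into a block matrix whose $(1,1)$ entry is $\alpha(I-\epsilon X)^2 - $ correction terms, while the left-hand side transforms predictably; choosing the test space adroitly collapses the problem to the single inequality $X(I-\epsilon X)^{-1}\le Y$ on $S(n)$.

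More concretely, the forward direction: assume \eqref{eq:struc} holds. For $\epsilon\in[0,1/\alpha)$, I would first argue $\epsilon X < I$ — indeed testing \eqref{eq:struc} with $(\xi, \xi)$ gives $\xi^T(X-Y)\xi \le 0$, but a sharper choice is needed; testing with $(\xi, 0)$ gives $\xi^T X\xi \le \alpha|\xi|^2$, hence $X \le \alpha I < \tfrac1\epsilon I$ for $\epsilon < 1/\alpha$, which is exactly $\epsilon X < I$. Then, for any $\xi$, set $\eta := (I-\epsilon X)^{-1}\xi$ and apply \eqref{eq:struc} to the pair $(\eta, (I-\epsilon X)\eta) = (\eta,\xi)$; expanding and using $\alpha \ge 1/\epsilon$ appropriately (or taking the limit/supremum over $\epsilon$) yields $\xi^T X(I-\epsilon X)^{-1}\xi \le \xi^T Y\xi$. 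The reverse direction: assume $\epsilon X < I$ and $X(I-\epsilon X)^{-1}\le Y$ for all $\epsilon\in[0,1/\alpha)$; to prove \eqref{eq:struc}, take an arbitrary test vector $(u,v)\in\mR^{2n}$ and I must show $u^TXu - v^TYv \le \alpha(|u|^2 - 2u^Tv + |v|^2) = \alpha|u-v|^2$. Writing $w := u - v$, it suffices to bound $u^TXu - v^TYv$ from above by $\alpha|w|^2$; using $Y \ge X(I-\epsilon X)^{-1}$ for every admissible $\epsilon$, I would optimize the resulting quadratic in the scalar parameter $\epsilon$ (or pick $\epsilon$ depending on $u,v$), the extremal choice reproducing exactly the constant $\alpha$. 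The identity $X(I-\epsilon X)^{-1} = X + \epsilon X(I-\epsilon X)^{-1}X$ (so that $v^TX(I-\epsilon X)^{-1}v \ge v^TXv$) and completing the square in $\epsilon$ are the algebraic engine here.

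The main obstacle I anticipate is the degeneracy of the right-hand block matrix: because $\alpha\bigl[\begin{smallmatrix} I & -I\\ -I & I\end{smallmatrix}\bigr]$ is only positive \emph{semi}definite, one cannot simply invert it and read off a Schur complement, and the equivalence is genuinely an equivalence of a \emph{family} (the quantifier $\forall\epsilon\in[0,1/\alpha)$) with a single matrix inequality. Getting the endpoint behavior right — why the open interval $[0,1/\alpha)$ is the correct range, and why the supremum over $\epsilon$ in this range captures \eqref{eq:struc} exactly rather than a strictly weaker or stronger statement — is the delicate point. I expect this is resolved by observing that $\epsilon \mapsto X(I-\epsilon X)^{-1}$ is monotone increasing in $\epsilon$ on $[0,1/\alpha)$ (by \eqref{eq:Xdineq} and its refinements), so the binding constraint is the supremal $\epsilon$, and that the congruence transformation above makes the correspondence between the test vector $(u,v)$ and the parameter $\epsilon$ completely explicit, via $\epsilon \leftrightarrow$ the "angle" between $u$ and $u-v$. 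Everything else is bookkeeping with $2\times 2$ block matrices and the Sherman–Morrison-type identity for $(I-\epsilon X)^{-1}$.
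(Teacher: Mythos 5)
Your plan is essentially the paper's proof: both directions reduce to testing \eqref{eq:struc} with vectors of the form $\bigl((I-\epsilon X)^{-1}\eta,\ \eta\bigr)$ and manipulating the resulting quadratic form, with the $(\xi,0)$ test giving $X\leq\alpha I<\tfrac{1}{\epsilon}I$ and the $(\xi,\xi)$ test covering $\epsilon=0$. Two details need repair, though neither breaks the argument. In the forward direction, ``using $\alpha\ge 1/\epsilon$'' has the inequality backwards: the relevant fact is $1/\epsilon>\alpha$ together with positive semidefiniteness of the right-hand block matrix, so \eqref{eq:struc} still holds with $\alpha$ replaced by $1/\epsilon$, after which the expansion collapses neatly to $\eta^T\bigl(I+\epsilon Y-(I-\epsilon X)^{-1}\bigr)\eta\ge 0$, i.e.\ $Y\ge X(I-\epsilon X)^{-1}$; alternatively one may keep $\alpha$, since the leftover term is $\epsilon(1-\alpha\epsilon)X^2\ge 0$, but then the bookkeeping is heavier. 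In the reverse direction, ``optimizing the quadratic in $\epsilon$'' and sending $\epsilon\uparrow 1/\alpha$ inside $(I-\epsilon X)^{-1}$ is exactly the delicate endpoint issue you anticipated: the limit blows up in directions where $\lambda_n(X)=\alpha$ (this is in fact excluded by finiteness of $Y$ under your hypothesis, but that must be said), and the square you need to complete is in the vector variables, not in $\epsilon$. The paper's cleaner route, which your own parenthetical ``limit/supremum'' already points toward, is: fix $\epsilon\in(0,1/\alpha)$, maximize the quadratic form over the first block variable at $\xi=(I-\epsilon X)^{-1}\eta$ to obtain \eqref{eq:struc} with constant $1/\epsilon$ in place of $\alpha$, and then let $1/\epsilon\downarrow\alpha$ on the outside; the monotonicity of $\epsilon\mapsto X(I-\epsilon X)^{-1}$ is then not needed.
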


The ``only if''-part will be needed in the proof of the Theorem. 
The ``if''-part was used in many of the Propositions in Section \ref{sec:examples} via the implication

\begin{corollary}\label{cor:matrixineq}
	If $\delta > 0$ and $I - \delta X > 0$, then 
	\[\begin{bmatrix}
	X & 0\\ 0 & - X(I-\delta X)^{-1}
	\end{bmatrix} \leq \frac{1}{\delta}\begin{bmatrix}
	I & -I\\ -I & I
	\end{bmatrix}.\]
\end{corollary}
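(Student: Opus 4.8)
The plan is to obtain this as an immediate consequence of the ``if''-part of Proposition \ref{prop:matrixineq}, applied with $\alpha := 1/\delta > 0$ and $Y := X(I-\delta X)^{-1}$. With these choices one has $1/\alpha = \delta$, so the block inequality \eqref{eq:struc} (which is exactly the asserted inequality) will follow once we check the two hypotheses of that part, namely that $\epsilon X < I$ and $X(I-\epsilon X)^{-1} \leq X(I-\delta X)^{-1}$ hold for \emph{every} $\epsilon \in [0,\delta)$.

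For the first hypothesis, let $\lambda_1 \leq \cdots \leq \lambda_n$ be the eigenvalues of $X$; the assumption $I - \delta X > 0$ says precisely $\delta\lambda_n < 1$. If $\lambda_n \leq 0$ then $\epsilon X \leq 0 < I$ trivially, and if $\lambda_n > 0$ then $\epsilon\lambda_n \leq \delta\lambda_n < 1$ because $0 \leq \epsilon < \delta$; either way $\epsilon X < I$. For the second hypothesis, fix a spectral decomposition $X = \sum_i \lambda_i P_i$ with mutually orthogonal projections $P_i$. For $0 \leq \epsilon \leq \delta$ the matrix $I - \epsilon X$ is positive definite and commutes with $X$, so $X(I-\epsilon X)^{-1} = \sum_i \frac{\lambda_i}{1-\epsilon\lambda_i}\,P_i$. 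Each scalar function $\epsilon \mapsto \lambda_i/(1-\epsilon\lambda_i)$ has derivative $\lambda_i^2/(1-\epsilon\lambda_i)^2 \geq 0$ on $[0,\delta]$, hence is non-decreasing there, and therefore $X(I-\epsilon X)^{-1} \leq X(I-\delta X)^{-1}$ for all $\epsilon \in [0,\delta)$ (the case $\epsilon = 0$ is also recorded in \eqref{eq:Xdineq}). Both hypotheses of the ``if''-part of Proposition \ref{prop:matrixineq} are met, which yields the corollary.

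I expect no real obstacle here; the only points requiring a little care are handling the possibly indefinite spectrum of $X$ when verifying $\epsilon X < I$, and noting that the open interval $[0,\delta)$ excludes the endpoint $\epsilon = \delta$, so no strictness clash arises with the statement of Proposition \ref{prop:matrixineq}.
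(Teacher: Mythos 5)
Your proposal is correct and follows the paper's own route: both invoke the ``if''-part of Proposition \ref{prop:matrixineq} with $\alpha=1/\delta$ and $Y=X(I-\delta X)^{-1}$, the only difference being that you verify the monotonicity $X(I-\epsilon X)^{-1}\leq X(I-\delta X)^{-1}$ by spectral decomposition, while the paper does it through the commuting factorization $X(I-\epsilon X)^{-1}-X(I-\delta X)^{-1}=(\epsilon-\delta)X^2(I-\epsilon X)^{-1}(I-\delta X)^{-1}$. Your extra care with $\epsilon X<I$ is fine and in fact slightly more explicit than the paper.
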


In fact, $T_\epsilon(X)\leq T_\delta(X) =: Y$ whenever $\epsilon\in[0,\delta)$ by Proposition \ref{prop:Tmonotonicity}.

\begin{proof}[Proof of Proposition]
	$\underline{\Rightarrow}:$
	The claim is verified for $\epsilon = 0$ by multiplying \eqref{eq:struc} from the left and right with vectors in $\mR^{2n}$ on the form $\bigl[\begin{smallmatrix}
	\xi \\ \xi
	\end{smallmatrix}\bigr]$. Let $\epsilon\in(0,1/\alpha)$. Multiplying with $\bigl[\begin{smallmatrix}
	\xi \\ 0
	\end{smallmatrix}\bigr]$ shows that $X\leq \alpha I < \frac{1}{\epsilon}I$, and $I-\epsilon X$ is therefore positive definite and thus invertible. Next, for $\eta\in\mR^n$ choose $\xi\in\mR^n$ to be
	\[\xi = (I-\epsilon X)^{-1}\eta.\]
	The right-hand side matrix in \eqref{eq:struc} is positive semidefinite, so the inequality continues to hold with $\alpha$ replaced with $1/\epsilon >\alpha$. That is,
	\begin{align*}
	0 &\leq
	\begin{bmatrix}
	\xi^T & \eta^T
	\end{bmatrix}\left( \frac{1}{\epsilon}\begin{bmatrix}
	I & -I\\ -I & I
	\end{bmatrix} - \begin{bmatrix}
	X & 0\\ 0 & -Y
	\end{bmatrix}\right)\begin{bmatrix}
	\xi\\ \eta
	\end{bmatrix}\\
	&= \frac{1}{\epsilon}\Big(|\xi|^2 + |\eta|^2 - 2\eta^T\xi\Big) - \xi^TX\xi + \eta^T Y\eta\\
	&= \frac{1}{\epsilon}\Big( \eta^T\left(I+\epsilon Y\right)\eta + \xi^T\left(I-\epsilon X\right)\xi - 2\eta^T\xi \Big)\\
	&= \frac{1}{\epsilon}\eta^T\Big( I + \epsilon Y - \left(I-\epsilon X\right)^{-1} \Big)\eta
	\end{align*}
	for all $\eta\in\mR^n$. It follows that
	\[Y \geq \frac{1}{\epsilon}\left(I-\epsilon X\right)^{-1} - \frac{1}{\epsilon}I = \frac{1}{\epsilon}\left(I-\epsilon X\right)^{-1}\big(I - (I-\epsilon X)\big) = X\left(I-\epsilon X\right)^{-1}.\]
	
	$\underline{\Leftarrow}:$
	Let $\bigl[\begin{smallmatrix}
	\xi \\ \eta
	\end{smallmatrix}\bigr]\in\mR^{2n}$. Since $I-\epsilon X$ is positive, the expression
	\begin{align*}
	\begin{bmatrix}
	\xi^T & \eta^T
	\end{bmatrix}&\left(\begin{bmatrix}
	X & 0\\ 0 & -Y
	\end{bmatrix} - \frac{1}{\epsilon}\begin{bmatrix}
	I & -I\\ -I & I
	\end{bmatrix}\right)\begin{bmatrix}
	\xi\\ \eta
	\end{bmatrix}\\
	&= \frac{1}{\epsilon}\left( -\eta^T\left(I+\epsilon Y\right)\eta - \xi^T\left(I-\epsilon X\right)\xi + 2\eta^T\xi \right),
	\end{align*}
	considered as a quadratic function of $\xi$, is maximized at $\xi = (I-\epsilon X)^{-1}\eta$ with value
	\[\frac{1}{\epsilon}\eta^T\left( \left(I-\epsilon X\right)^{-1} - I - \epsilon Y \right)\eta = \eta^T\left( X\left(I-\epsilon X\right)^{-1}  - Y \right)\eta \leq 0.\]
	This confirms \eqref{eq:struc} since it holds for all numbers $1/\epsilon > \alpha$. 
\end{proof}

\section{Proof of the Theorem}\label{sec:proof}

The standard tool when proving comparison principles is the \emph{Theorem of Sums} or \emph{Ishii's Lemma}. It produces points in the \emph{sub- or superjet closures} $\overline{J}^{2,\mp}w_i(\hat{x}_i)\subseteq \mR^n\times \Sn $ at a critical point $(\hat{x}_1,\dots, \hat{x}_N)$ for a sum of semicontinuous functions $w_i(x_i)$ in $\Omega\times\cdots\times\Omega$. The result is a corner-stone in the viscosity theory and relies on the use of sup/inf-convolutions and, ultimately, on Alexandrov's theorem which states that a convex function is twice differentiable almost everywhere. We shall not go into the details here, but rather restate the result in a simple form that suffices for our needs. For the definitions and proof, we refer to Section 2 and 3 of \cite{MR1118699}.

\begin{lemma}[Theorem of Sums]\label{lem:ishii}
	Let $\Omega\subseteq\mR^n$ be open and bounded.
	Suppose $v\in USC(\oO)$, $u\in LSC(\oO)$, and assume that $(x_k,y_k)\in \Omega\times \Omega$ is a maximum point of
	\[v(x) - u(y) - \frac{k}{2}\lvert x-y\rvert^2,\qquad k= 1,2,\dots,\]
	in $\oO\times\oO$. Then there are matrices $X_k,Y_k\in \Sn $ such that
	\[\big(k(x_k-y_k),X_k\big) \in\overline{J}^{2,+}v(x_k)\qquad\text{and}\qquad \big(k(x_k-y_k),Y_k\big) \in\overline{J}^{2,-}u(y_k),\]
	and where
	\[-3k \begin{bmatrix}
	I & 0\\ 0 & I
	\end{bmatrix}\leq
	\begin{bmatrix}
	X_k & 0\\ 0 & -Y_k
	\end{bmatrix} \leq
	3k \begin{bmatrix}
	I & -I\\ -I & I
	\end{bmatrix}\]
	in $\mathcal{S}^{2n}$.
\end{lemma}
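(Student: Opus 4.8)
The plan is to deduce this from the general Theorem on Sums (Theorem 3.2 in \cite{MR1118699}), whose proof is the technical core of the viscosity theory and which I would not reproduce here, and then to pin down the constants by a direct computation. Concretely, apply that theorem to the two semicontinuous functions $v$ and $-u$ on $\Omega$, with the smooth coupling $\phi(x,y):=\frac{k}{2}|x-y|^2$ and the interior local maximum point $(x_k,y_k)$. Since $D_x\phi(x_k,y_k)=k(x_k-y_k)=-D_y\phi(x_k,y_k)$, it produces, for every $\varepsilon>0$, matrices $X_k,Y_k\in S(n)$ with
\[(k(x_k-y_k),X_k)\in\overline{J}^{2,+}v(x_k),\qquad (k(x_k-y_k),Y_k)\in\overline{J}^{2,-}u(y_k),\]
the second membership obtained by feeding $-u$ to the theorem and using $\overline{J}^{2,+}(-u)=-\overline{J}^{2,-}u$, together with the estimate
\[-\Big(\tfrac1\varepsilon+\|A\|\Big)I\;\le\;\begin{bmatrix} X_k & 0\\ 0 & -Y_k\end{bmatrix}\;\le\;A+\varepsilon A^2,\qquad A:=D^2\phi(x_k,y_k)=k\begin{bmatrix} I & -I\\ -I & I\end{bmatrix}.\]
The stated form then falls out of the choice $\varepsilon=1/k$: one has $A^2=2kA$, so $A+\varepsilon A^2=3A=3k\bigl[\begin{smallmatrix} I & -I\\ -I & I\end{smallmatrix}\bigr]$, while $\|A\|=2k$ turns the left bound into $-3kI$ on $S(2n)$.

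For completeness I would also recall the ingredients behind Theorem 3.2, since that is where the real work lies. First replace $v$ by its sup-convolution $v^\lambda(x):=\sup_z\{v(z)-\frac{1}{2\lambda}|x-z|^2\}$ and $u$ by its inf-convolution $u_\lambda$; these are, respectively, $\lambda^{-1}$-semiconvex and $\lambda^{-1}$-semiconcave, they decrease (increase) to $v$ ($u$), and — after first perturbing the functional by a quadratic to make the maximum strict — the functional $v^\lambda(x)-u_\lambda(y)-\frac{k}{2}|x-y|^2$ attains an interior maximum at a point $(\hat x_\lambda,\hat y_\lambda)\to(x_k,y_k)$. Semiconvexity lets Alexandrov's theorem enter: $v^\lambda$ and $u_\lambda$ are twice differentiable almost everywhere. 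Jensen's lemma, together with the matrix lemma from the Appendix of \cite{MR1118699}, then allows one to slide to a nearby common point of twice differentiability that is still an approximate maximum of the functional perturbed by a small linear term; at such a point the classical necessary condition $D^2(\text{functional})\le 0$ holds, and splitting it into $n\times n$ blocks yields exactly the matrix inequality above, with the slack $\varepsilon A^2$ accounting for the size of the linear perturbation and the lower bound coming from the $\lambda^{-1}$-semiconvexity/-concavity. Finally, letting $\lambda\to 0$ and invoking the stability of the second-order superjets under sup-convolution returns points of $\overline{J}^{2,+}v$ and $\overline{J}^{2,-}u$ for $v$ and $u$ themselves.

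The main obstacle — and the reason the detailed argument is deferred to \cite{MR1118699} — is precisely the passage from Alexandrov's ``almost everywhere'' information to a single clean inequality coupling $X_k$ with $Y_k$: one must show via Jensen's lemma that points of twice differentiability at which the suitably perturbed functional is still near-maximal fill a set of positive measure arbitrarily close to $(\hat x_\lambda,\hat y_\lambda)$, and then control the full $2n\times 2n$ Hessian block rather than its two diagonal blocks in isolation, which is exactly what the Appendix matrix lemma of \cite{MR1118699} is designed for. Granting that machinery, the specialization to our quadratic $\phi$ and the choice $\varepsilon=1/k$ are the only remaining computations, and they give the constants $-3k$ and $3k$ displayed in the statement.
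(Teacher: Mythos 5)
Your proposal is correct and matches the paper's route: the paper itself simply restates Theorem 3.2 of \cite{MR1118699} (specialized to $\phi(x,y)=\frac{k}{2}|x-y|^2$) and defers the proof to Sections 2--3 there, exactly the machinery you sketch. Your constant computation is right: with $A=k\bigl[\begin{smallmatrix} I & -I\\ -I & I\end{smallmatrix}\bigr]$ one has $A^2=2kA$ and $\|A\|=2k$, so the choice $\varepsilon=1/k$ gives the bounds $-3kI$ and $3k\bigl[\begin{smallmatrix} I & -I\\ -I & I\end{smallmatrix}\bigr]$ as stated.
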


Viscosity solutions can be defined from the point of view of sub- and superjets. In particular, if $v$ is a, say, subsolution to an elliptic equation $F(\cH w, \nabla w,w,x) = 0$ in $\Omega$, and 
$(p,X)$ is in the superjet $J^{2,+}v(x_0)$ for some $x_0\in\Omega$, then $F(X,p,v(x_0),x_0)\geq 0$.
However, in order to come to the same conclusion when $(p,X)$ is only in the superjet \emph{closure} $\overline{J}^{2,+}v(x_0)$, we need $F$ to be at least upper semicontinuous. Lower semicontinuity is needed for the subjets, and Proposition \ref{prop:contT} is thus expedient.

The following Lemma is the reason we only need to consider points $x$ and $y$ such that $x,y\to x_0\in\Omega$ in Theorem \ref{thm}.


\begin{lemma}\label{lem:touch}
Let $w\in USC(\oO)$ in an open and bounded $\Omega\subseteq\mR^n$ with $w|_{\partial\Omega}\leq 0$. If $w>0$ somewhere in $\Omega$, there is a quadratic
\[\phi(x) = a + b^Tx - \frac{\tau}{2}|x|^2,\qquad a\in\mR,\,b\in\mR^n,\,\tau > 0,\]
touching $w$ strictly from above at some point $x_0\in\Omega$. That is,
$\phi(x_0) = w(x_0)$ and $\phi>w$ in $\oO\setminus\{x_0\}$.
\end{lemma}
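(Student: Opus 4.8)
The plan is to construct the dome $\phi$ by a compactness-plus-perturbation argument, in three steps.

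\textbf{Step 1: locate an interior maximum.} Since $\Omega$ is a domain, $\oO$ is compact, and since $w\in USC(\oO)$ it attains its maximum $M:=\max_{\oO}w$. The hypotheses give $M\geq\sup_\Omega w>0\geq\sup_{\partial\Omega}w$, so every maximizer of $w$ lies in the open set $\Omega$; fix one, say $x^*$.

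\textbf{Step 2: force a strictly negative quadratic term while keeping the maximum interior.} A constant already touches $w$ from above at $x^*$, but we need $m>0$, and a genuinely concave paraboloid through $(x^*,M)$ may dip below $w$ if $w$ is flat near $x^*$. So I would set $R:=\max_{x\in\oO}|x|$ (positive, as $\Omega$ is a nonempty open set) and, for a small parameter $m$ to be chosen, consider $g(x):=w(x)+\tfrac m2|x|^2$, again upper semicontinuous on $\oO$, hence with a maximizer $x_0\in\oO$. The key estimate is $g(x_0)\geq g(x^*)\geq M$, whereas $g(x)\leq\tfrac m2 R^2$ for $x\in\partial\Omega$; choosing $0<m<M/R^2$ makes the boundary values strictly below $g(x_0)$, so $x_0\in\Omega$. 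Rearranging $g(x)\leq g(x_0)$ then gives $w(x)\leq \phi_0(x):=w(x_0)+\tfrac m2|x_0|^2-\tfrac m2|x|^2$ on $\oO$, with equality at $x_0$; here the subtracted term $\tfrac m2(|x|^2-|x_0|^2)$ is exactly what a concave paraboloid supplies, so no dipping occurs.

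\textbf{Step 3: make the touching strict.} The paraboloid $\phi_0$ may touch $w$ at several points, so I would add a small convex bump centred at $x_0$: pick $\eta\in(0,m/2)$ and put $\phi(x):=\phi_0(x)+\eta|x-x_0|^2$. Expanding, $\phi(x)=a+b^Tx-\tfrac{m'}{2}|x|^2$ with $a=w(x_0)+(\tfrac m2+\eta)|x_0|^2$, $b=-2\eta x_0$, and $m'=m-2\eta>0$, so $\phi$ has the required form. Since $\eta|x-x_0|^2\geq 0$, with equality only at $x_0$, we get $\phi\geq\phi_0\geq w$ on $\oO$ and $\phi(x_0)=w(x_0)$; and for $x\neq x_0$ either $\phi_0(x)>w(x)$, in which case the added nonnegative term only helps, or $\phi_0(x)=w(x)$, in which case $\phi(x)=w(x)+\eta|x-x_0|^2>w(x)$. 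Either way $\phi>w$ on $\oO\setminus\{x_0\}$, which is the claim.

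\textbf{Main obstacle.} The only delicate point is Step 2: after replacing $w$ by $w+\tfrac m2|x|^2$ the maximizer could migrate to $\partial\Omega$, and one must quantify how small $m$ must be (relative to the gap $M-\sup_{\partial\Omega}w\geq M$) to prevent this. Everything else is bookkeeping — in particular the \emph{sign} of the perturbation in Step 3 is forced: one adds a convex term so as not to spoil $\phi\geq w$, and this stays compatible with $m'>0$ precisely because $\eta$ may be taken below $m/2$.
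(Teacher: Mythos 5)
Your proof is correct and follows essentially the same route as the paper: maximize $w$ plus a small concave-compensating quadratic, use $w|_{\partial\Omega}\leq 0$ and the smallness of $m$ relative to $\max w/R^2$ to force the maximizer $x_0$ into $\Omega$, and then add a convex bump centred at $x_0$ to make the touching strict while keeping the overall quadratic concave. The only differences are cosmetic choices of constants (the paper fixes $m=\beta/R^2$ with $\oO\subseteq B_R(0)$ and adds exactly $\tfrac{m}{2}|x-x_0|^2$, whereas you take $m<M/R^2$ and $\eta<m/2$), which do not change the argument.
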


\begin{proof}
Recall that an upper semicontinuous function obtains its maximum on compact sets.
Set $\beta := \max_{\oO} w > 0$ and let $R>0$ be so large so that $\oO\subseteq B_R(0)$. Next, let $\tau := \beta/R^2$ and put
\begin{equation}
c := \max_{y\in\oO}\left(w(y) + \tau|y|^2\right).
\label{eq:c}
\end{equation}
Consider the function
\[\psi(x) := c - \tau|x|^2\]
which obviously is greater or equal to $w$ in $\oO$. Also, $c\geq\beta$, so the maximum in \eqref{eq:c} must be obtained at an interior point $x_0\in\Omega$ since, for $\eta\in\partial\Omega$ we have
\[w(\eta) + \tau|\eta|^2 \leq \frac{\beta}{R^2}|\eta|^2 < \beta.\]
Thus, $\psi(x_0) = w(x_0)$, and we may take $\phi(x) := \psi(x) + \frac{\tau}{2}|x-x_0|^2$.
\end{proof}

We are now ready to assemble the proof of Theorem \ref{thm}. Since our equations are independent of the gradient, we slightly abuse the notation and consider the semi jets as subsets of $\Sn $ instead of $\mR^n\times \Sn $.

Let $u$ and $v$ be super- and subsolutions of the equation $F(\cH w,x) = 0$. By Corollary \ref{cor:oF}, we may replace $F$ with its canonical representative $\oF$.
Assume $v|_{\partial\Omega}\leq u|_{\partial\Omega}$ and suppose to the contrary that $v>u$ somewhere in $\Omega$. As $v-u\in USC(\oO)$, Lemma \ref{lem:touch} provides a test function $\phi$ touching $v-u$ strictly from above at some point $x_0\in\Omega$.
Setting
\[\hat{u}(x) := u(x) + \phi(x)\]
yields $v(x_0) - \hat{u}(x_0) = 0$ and $v - \hat{u} < 0$ in $\oO\setminus\{x_0\}$.
The next computations are standard. For $k=1,2,\dots$, let $(x_k,y_k)\in\oO\times\oO$ be the maximum point of
\[v(x) - \hat{u}(y) -\frac{k}{2}\lvert x-y\rvert^2.\]
By compactness we may assume, by taking a subsequence if necessary, that $(x_k,y_k)$ converges to some point $(x^*,y^*)$ in $\oO\times\oO$ as $k\to\infty$.
Since
\[v(x_k) - \hat{u}(y_k) -\frac{k}{2}\lvert x_k-y_k\rvert^2 \geq  v(x_0) - \hat{u}(x_0) = 0\]
and $v-\hat{u}$ is bounded above by semicontinuity, it follows that $x^*=y^*$. Moreover, by taking the limsup of $0 \leq \frac{k}{2}\lvert x_k-y_k\rvert^2 \leq v(x_k) - \hat{u}(y_k)$
we have
\[\lim_{k\to\infty}k\lvert x_k-y_k\rvert^2 = 0\]
and $v(x^*) - \hat{u}(x^*) = 0$. Thus $y^* = x^* = x_0$ being the only touching point of $v$ and $\hat{u}$ in $\oO$. In particular, $(x_k,y_k)$ is eventually in $\Omega\times \Omega$.

By Lemma \ref{lem:ishii} there are points $X_k$ and $\hat{Y}_k$ in the semi jet closures of $v(x_k)$ and $\hat{u}(y_k)$ such that
\[-3k\begin{bmatrix}
	I & 0\\ 0 & I
\end{bmatrix}\leq \begin{bmatrix}
	X_k & 0\\ 0 & -\hat{Y}_k
\end{bmatrix} \leq 3k  \begin{bmatrix}
	I & -I\\ -I & I
\end{bmatrix}.\]
Since $\hat{u} = u+\phi$, we may write $\hat{Y}_k = Y_k + \mathcal{H}\phi(y_k) = Y_k - \tau I$ where $\tau >0$ is the constant from Lemma \ref{lem:touch}, and where $Y_k$ is in the subjet closure of $u(y_k)$. By virtue of Proposition \ref{prop:contT},
\[\oF(X_k,x_k) \geq 0\qquad\text{and}\qquad 0 \geq \oF(Y_k,y_k) = \oF(\hat{Y}_k + \tau I,y_k) = \oF(\hat{Y}_k,y_k) + \tau.\]
Set
\[t_k := 3k|x_k-y_k|^2 + |x_k-x_0| + |y_k-x_0| + 1/k.\]
Then $x_k,y_k\in B_{t_k}(x_0)$,  $0\leq \delta_k := \frac{|x_k-y_k|^2}{t_k} < \frac{1}{3k}$, 
and Proposition \ref{prop:matrixineq} yields $Z_k := X_k\left(I-\delta_k X_k\right)^{-1}\leq \hat{Y}_k$.
Moreover, $\delta_k|X_k| < \frac{3k}{3k} = 1$, and thus $Z_k\in \Theta_+^{\delta_k}(x_k)$.

Since $t_k\searrow 0$ as $k\to\infty$, our condition \eqref{eq:cond} creates the contradiction
\begin{align*}
\tau &\leq \lim_{k\to\infty}-\oF(\hat{Y}_k,y_k)\\
     &\leq \lim_{k\to\infty}-\oF(Z_k,y_k)\\
	 &\leq \lim_{k\to\infty} \dist\left( Z_k, \Theta_+(y_k) \right)\\
	 &\leq \lim_{k\to\infty}\sup_{\substack{x,y\in B_{t_k}(x_0)\\ Z\in \Theta_+^{\delta_k}(x)}} \dist\left( Z, \Theta_+(y) \right) = 0,
\end{align*}
which proves the Theorem.

\section{A linear equation without superposition- and comparison principles}\label{sec:lin}

When it comes to the comparison principle in linear elliptic equations
\[\tr(A(x)\cH w) = 0,\]
an immediate first observation is that $A$ cannot be allowed to vanish at some point in the domain.\footnote{At least not in our setting with a classical definition of viscosity sub- and supersolutions. There is, however, a theory for $L^p$-viscosity solutions where this can be allowed since the ingredients then are interpreted only outside sets of measure zero. See e.g. \cite{MR1376656}.} Because if $A(x_0) = 0$, then the (sub)solution $v\equiv 0$ and the lower semicontinuous supersolution
\[u(x) =
\begin{cases}
0,\qquad &\text{if $x\neq x_0$,}\\
-1,&\text{if $x = x_0$.}
\end{cases}\]
constitute a counterexample.
On the other hand, one should remember that ``$\leq$'' is only a partial ordering in $\Sn $, so $0\leq A(x)\neq 0$ does not imply $0<A(x)$. That is, the equation does not necessarily have to be strictly elliptic.
Indeed, by Proposition \ref{prop:lin} it is sufficient if $A$ is on the form
\[A(x) = \sum_{i=1}^m \bfq_i(x)\bfq_i^T(x)\]
for some locally Lipschitz vector fields $\bfq_i\colon\Omega\to\mR^n\setminus\{0\}$, $i = 1,\dots,m$.
Our following example shows that comparison may fail if the Lipschitz condition is not met.

Consider the equation
\begin{equation}
x^{2/3}w_{xx} - 2(xy)^{1/3}w_{xy} + y^{2/3}w_{yy} = 0
\label{eq:countereq}
\end{equation}
in regions $(0,0)\notin\Omega$ of the plane.
It can be written as $\tr\big(A(x,y)\cH w\big) = 0$
where $A\colon \mR^2\to \mathcal{S}^2_+ $ is given by
\[A(x,y) := \begin{bmatrix}
	x^{2/3} & -(xy)^{1/3}\\ -(xy)^{1/3} & y^{2/3}
\end{bmatrix}.\]
By defining
\[\bfq(x,y) := \left[
	x^{1/3},\, - y^{1/3}\right]^T\]
we see that $A(x,y) = \bfq(x,y)\bfq^T(x,y)$ and $A$ has constant rank one away from the origin. Still, the equation \eqref{eq:countereq} does not satisfy the hypothesis of Proposition \ref{prop:lin} in subsets $(0,0)\notin\Omega\subseteq\mR^2$ because $A/\lVert A\rVert_1 = \bfq\bfq^T/|\bfq|^2$
and $\bfq/|\bfq|$ is not Lipschitz at the coordinate axes.

The reason for this choice of equation is that $\bfq^T$ is, modulo a factor $4/3$, the gradient of Aronsson's function
\[u(x,y) := x^{4/3} - y^{4/3},\]
which is $\infty$-harmonic in the viscosity sense in $\mR^2$. Therefore, $u$ is also a solution to the linear equation \eqref{eq:countereq}. To see this, assume that $\phi$ touches $u$ from, say, below at $(x_0,y_0)\in\mR^2$ then $\nabla\phi(x_0,y_0) = \frac{4}{3}\bfq^T(x_0,y_0)$ as $u$ is $C^1$ and
\[\tr\big(A\cH \phi\big) = \bfq^T\cH\phi\bfq = \frac{9}{16}\nabla\phi\cH\phi\nabla\phi^T = \frac{9}{16}\Delta_\infty\phi\leq 0\qquad\text{at $(x_0,y_0)$.}\]

We claim that the piecewise linear function
\[v(x,y) := |x| - |y|\]
is a solution to \eqref{eq:countereq} as well. We only have to check on the axes. There are no test functions touching from above on $\{x=0\}$ and there are no touching from below on $\{y=0\}$. Suppose therefore that $\phi$ touches $v$ from above at $(x_0,0)$, $x_0\neq 0$. Then $\phi_{yy}(x_0,0)$ can be arbitrarily negative, but
\[\phi_{xx}(x_0,0) = \lim_{\epsilon\to 0}\frac{\phi(x_0-\epsilon,0) - 2\phi(x_0,0) + \phi(x_0+\epsilon,0)}{\epsilon^2}\geq 0.\]
Since $A(x_0,0) = x_0^{2/3}e_1e_1^T$, it follows that
\[\tr\big(A(x_0,0)\cH \phi(x_0,0)\big) = x_0^{2/3}\phi_{xx}(x_0,0)\geq 0.\]
Similarly, $\tr\big(A\cH \phi\big)\leq 0$ at a touching point $(0,y_0)$ for test functions $\phi\leq v$.

We remark that $v$ is not $\infty$-harmonic since the gradient of the test functions does not necessarily align with the coordinate axes at the touching points $(x_0,0)$ and $(0,y_0)$.

\begin{figure}[h]
	\centering
	\begin{subfigure}[b]{0.45\textwidth}
		\centering
		\includegraphics{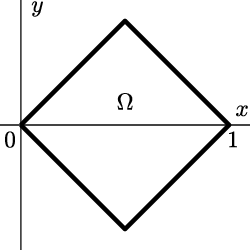}
		\label{fig:domain}
	\end{subfigure}
	\begin{subfigure}[b]{0.45\textwidth}
		\centering
		\includegraphics{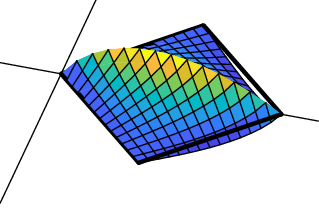}
		\label{fig:fun}
	\end{subfigure}
	\caption{The difference $v-u$ has an interior maximum in $\Omega$.}
	\label{fig:comparison}
\end{figure}

Now consider the two solutions $u$ and $v$ of \eqref{eq:countereq} in the diamond-shaped domain $\Omega$ given by $|y| < \min\{x,1-x\}$. It is bounded by the four line segments
\begin{align*}
\ell_1^\pm &:= \big\{(x,y)\;|\; y=\pm x,\,0\leq x\leq 1/2\big\}\quad\text{and}\\
\ell_2^\pm &:= \big\{(x,y)\;|\;y=\pm(1-x),\,1/2\leq x\leq 1\big\}.
\end{align*}
Both $u$ and $v$ are zero on $\ell_1^\pm$, and on $\ell_2^\pm$ we have 
\[u(x,\pm(1-x)) \geq 2x - 1 = v(x,\pm(1-x)),\qquad 1/2\leq x\leq 1.\]
This can be checked by using that $u(1/2,\pm 1/2) = 0$ and $u(1,0) = 1$, and that $x\mapsto u(x,\pm(1-x))$ is concave.
Therefore $v|_{\partial\Omega} \leq u|_{\partial\Omega}$, and the comparison principle is violated as, on the interior line $(x,0)$, $0<x<1$,
\[u(x,0) = x^{4/3} < x = v(x,0).\]

Note that the vanishing of $A$ at the boundary point $(0,0)$ is not the issue. We can cut away the leftmost corner of
$\Omega$, add a small constant to $u$, and come to the same conclusion.
Also note that $\tr\big(A(x,y)(-\tau I)\big) = -(x^{2/3} + y^{2/3})\tau$ is negative in $\Omega$ for all $\tau>0$, and even uniformly when the corner is gone. Together with Lemma \ref{lem:touch}, this property shows that the maximum principle \emph{is} valid for the equation.
Therefore, the function $v-u$ is not a subsolution and there is no superposition principle in the equation \eqref{eq:countereq} as well.

\paragraph{Acknowledgments:}
Supported by the Academy of Finland (grant SA13316965) and Aalto University.
We thank the Reviewer for the comments on the earlier version of the manuscript, and for the suggestions on how to improve it.

\bibliographystyle{alpha}
\bibliography{/Users/karlkb/Documents/references.bib}


\end{document}